\newcommand{\R}{\mathbb{R}}
\newcommand{\Z}{\mathbb{Z}}
\newcommand{\E}{\mathbb{E}}
\newcommand{\D}{\mathbb{D}}
\newcommand{\X}{\mathbb{X}}
\newcommand{\tS}{\tilde {\cal S}}
\newcommand{\tT}{\tilde {\cal T}}
\DeclareMathOperator{\diag}{diag}
\newtheorem{lemma}{Lemma}
\newtheorem{assumption}{Assumption}
\newtheorem{theorem}{Theorem}
\newtheorem{proposition}{Proposition}
\newtheorem{corollary}{Corollary}
\numberwithin{equation}{section}
\newcommand{\gaonote}[1]{\textcolor{black}{#1}}
\newcommand{\tonnote}[1]{\textcolor{black}{#1}}
\newcommand{\jdnote}[1]{\textcolor{black}{#1}}
\begin{document}

\begin{center}
{\Large \textbf{Validity of heavy-traffic steady-state
approximations in many-server queues with abandonment}}
\end{center}

\author{}
\begin{center}
  J. G. Dai\,\footnote{School of ORIE, Cornell University, Ithaca, NY
    14850; jim.dai@cornell.edu; on leave from Georgia Institute of
    Technology}, A. B. Dieker\,\footnote{H. Milton Stewart School of
    Industrial and Systems Engineering, Georgia Institute of
    Technology, Atlanta, GA 30332; ton.dieker@isye.gatech.edu}, and Xuefeng
  Gao\,\footnote{Department of Systems Engineering and Engineering Management, The Chinese University of Hong Kong,  Shatin, N. T., Hong Kong; gxf1240@gmail.com}
\end{center}
\begin{center}
\today
\end{center}

\begin{abstract}
We consider $GI/Ph/n+M$ parallel-server systems with a renewal
arrival process,  a phase-type service time distribution, $n$
homogenous servers, and an exponential patience time distribution
with positive rate. We show that in the Halfin-Whitt regime, the
  sequence of stationary distributions
corresponding to the normalized state processes is tight.
 As a consequence, we
establish an interchange of heavy traffic and steady state limits
for $GI/Ph/n+M$ queues.
\end{abstract}

\textbf{Keywords:} Diffusion approximations, stationary
distribution, geometric Lyapunov function, weak convergence, multi-server
queues, customer abandonment, Halfin-Whitt regime, phase-type
distribution, piecewise OU processes.

\section{Introduction}

Parallel server queueing systems with customer abandonment serve as a
building block for modeling complex service systems such as customer call
centers. \cite{ganskoolemandelbaum} argued that best practice
companies should operate a service system in a quality- and
efficiency-driven (QED) regime, also known as the Halfin-Whitt regime.
In this regime, the service system
provides high-quality service and at the same time achieves high
server utilization. This is possible only when there is a large
number of servers working in parallel on a common pool of customers.

This paper focuses on a subset of $GI/GI/n+M$ queueing systems. In
such a queueing system, there are $n$ identical servers serving
customers whose interarrival times are i.i.d.\ following a general
distribution (the first $GI$) and whose service times are i.i.d.\
following a general distribution (the second $GI$). In addition,
customers' patience times are i.i.d.\ following an exponential
distribution ($+M$): when a customer's waiting time in queue exceeds
his patience time, the customer abandons the system without service.

Despite decades of research, there is still no general analytical or
numerical tool to efficiently and accurately predict the steady-state
performance of a $GI/GI/n+M$ system. Computer simulation is often the
only remaining tool available, but it can be slow when the number of
servers is large, particularly when the system is in the QED regime.

Recently, \cite{DaiHe13} proposed diffusion models to approximate a
$GI/Ph/n+M$ system when the service time distribution is of
phase-type.  Their approximations are rooted in many-server heavy
traffic limit theorems proved in \cite{DaiHeTezcan10}.  The
numerical examples in \cite{DaiHe13} demonstrate that the
steady-state performance of the diffusion model provides a
remarkably accurate estimate for the steady-state performance of the
corresponding queueing system, even when the number of servers is
moderate. In this paper, we provide a justification for
the diffusion approximation procedure in \cite{DaiHe13}.

We now describe our results and contributions more precisely.  When
the number of servers $n$ is fixed, a $GI/Ph/n+M$ system can be
represented by a certain continuous time Markov process
$\X^n=\{\X^n(t): t\ge 0\}$. Often $\X^n(t)$ converges in distribution
to $\X^n(\infty)$ as time $t\to\infty$, where the random variable
$\X^n(\infty)$ has the stationary distribution of $\X^n$.  On the
other hand, \cite{DaiHeTezcan10} shows that $\tilde \X^n$, as a
sequence of stochastic processes that are centered and scaled
versions of $\X^n$, converges in distribution to some diffusion process
$\tilde\X$ as $n\to\infty$ under a heavy traffic condition. The limit
process $\tilde \X$ is a piecewise Ornstein-Uhlenbeck (OU) process.
A similar result was proved earlier in \cite{PuhalskiiReiman00} for
$GI/Ph/n$ systems without customer abandonment.  The convergence
proved in \cite{DaiHeTezcan10}
implies that each finite $t\ge 0$, $\tilde \X^n(t)$ converges in
distribution to $\tilde \X(t)$, but, as in almost all diffusion
limits,   does not cover the case when
$t=\infty$.
 This paper proves that $\tilde
\X^n(\infty)$ converges in distribution to $\tilde \X(\infty)$ as
$n\to\infty$; see  Theorem \ref{thm:tightness}  and
 Corollary \ref{thm:interchange} in Section~\ref{sec:main-result} below.
 Often, one can also prove that $\tilde \X(t)$
converges in distribution to $\tilde \X(\infty)$ as $t\to\infty$. Formally,
our results can be stated as
\[
  \lim_{n\to\infty} \lim_{t\to\infty} \tilde \X^n(t) \stackrel{d}=
 \lim_{t\to\infty}  \lim_{n\to\infty} \tilde \X^n(t),
\]
which is known as the interchange limit theorem.

There has been a surge of interest in establishing interchange limit
theorems in the last ten years in both the conventional heavy
traffic setting and  many-server heavy traffic setting.  To prove
an interchange limit theorem, when the stationary distribution of the
limit process $\tilde \X$ is unique, it is
sufficient to prove that the
sequence of random variables $\{\tilde \X^n(\infty): n\ge 1\}$  is
tight.  \cite{GamarnikZeevi06} pioneered an approach in proving
tightness in the context of generalized Jackson networks in
conventional heavy traffic when all distributions are assumed to
have finite exponential moments. Key to their proof is the
construction of a geometric Lyapunov function.  Inspired by this
work, \cite{budhiraja2009stationary} devised an alternative method to
prove tightness along with some
other results also in the context of generalized Jackson networks,
but with the minimal two moment assumption on all distributions.
\cite{budhiraja2009stationary} did not construct a geometric
Lyapunov function, but they cleverly utilized  and sharpened a fluid
limit approach introduced in \cite{daimey95}, and their approach is
potentially more general and obtains sharper results.

Our current paper follows the approach in \cite{GamarnikZeevi06} by
constructing a geometric Lyapunov function; see Lemma
\ref{lem:abddrift} in Section~\ref{sec:Lyapunovfluid} below.  We
heavily rely on a delicate analysis of the behavior of our Lyapunov
functions applied to a fluid model for $GI/Ph/n+M$ queues. In
particular, when applied to the fluid model, we show that our Lyapunov
function decreases at a rate that is proportional to the size of the
fluid state when it is far away from origin; see part (b) of
Lemma~\ref{lem:gbetaflu}.  Because of the customer abandonment in
our model, when the waiting fluid is high, the decreasing rate in
our Lyapunov function should be expected due to abandonment.
However, when the waiting fluid is not high, but a large fluid state
is due to the huge imbalance of servers among different service
phases, the decreasing rate is by no means obvious.  Our proof
relies critically on a common quadratic function that was used in
\cite{diekergao:OU}. Using the common quadratic Lyapunov function,
the authors were able to devise a Lyapunov function and prove the
existence of the stationary distribution of the limit process
$\tilde \X $. The diffusion process $\tilde \X$ has two different
drifts in two separate regions of state space and the Lyapunov
function works well in these two regions.
It remains an open problem
whether the approach in \cite{budhiraja2009stationary} can be
adapted to our setting. \gaonote{
One major \jdnote{step in their approach is} to obtain estimates on moments of the state
 process that are uniform in both time and the scaling parameter.
In \cite{budhiraja2009stationary}, the authors relied on the uniform Lipschitz property
\tonnote{(with respect to time)} of the Skorohod map to obtain such estimates. However, in our many-server queue setting, the map introduced in Lemma~\ref{lem:Phi} does not have such property: it is Lipschitz continuous but the Lipschitz constant depends on time; see part (c) of Lemma~\ref{lem:Phi}.}

In the many-server setting without customer abandonment,
\cite{HalWhi81} is the first paper to establish a many-server
diffusion limit for the $GI/M/n$ model. In the same paper they
proved the tightness result. \cite{GamarnikMomcilovic08} proves the
tightness result where service time distribution is lattice-valued
with a finite support. \cite{gamarnik2013steady} has generalized
this result to $GI/GI/n$ queues. In a single class, multiple server
pool model, \cite{tezcan2008optimal} proved asymptotic optimality of
some routing policies and some interchange limit results.

In the many-server setting with customer abandonment,
\cite{gamarnik2012multiclass} proved a tightness result.  In their
model, customers have many classes. Each class has its own
homogeneous Poisson arrival process, exponential service time
distribution, and exponential patience time distribution with class
dependent rate. The service policy in choosing which class of
customers to serve next can be arbitrary as long as it is non-idling.
When the service policy is first-in-first-out across customer classes and the
patience rate
is independent of customer classes, their model reduces to a special
case of our model considered in this paper.
Their proof critically relies on the assumption that service time
distributions are exponential and it remains an open problem
whether their tightness result holds for non-exponential service time
distributions. Empirical study in \cite{brown2005statistical} suggests
that the service time distributions are approximately lognormal, not
exponential.

In the conventional heavy traffic setting,
\cite{gurvichinterchange2013} systematically generalized the results
in \cite{GamarnikZeevi06} to multiclass queueing networks.
\cite{katsuda2010state} proves some interchange limit results for a
multiclass single-server queue with feedback under various
disciplines. \cite{ye2010diffusion} studied interchange limit
results in a head-of-line bandwidth sharing model with two customer
classes and two servers.

The rest of the paper is organized as follows.
Section~\ref{sec:giphn+m-queu-diff} presents the background on $GI/Ph/n+M $
queues and diffusion approximations. Assuming positive Harris
recurrence, Section \ref{sec:main-result}
summarizes our main results, Theorem \ref{thm:tightness} and
Corollary~\ref{thm:interchange}.
Section \ref{sec:Lyapunovfluid} introduces our
Lyapunov function and a fluid model used
to prove Lemma~\ref{lem:abddrift}.
Section~\ref{sec:Lyapunovdiff} discusses a key
lemma, Lemma~\ref{lem:abddrift}, for proving Theorem~\ref{thm:tightness}.
In Appendix~\ref{sec:posit-harr-recurr}, we prove the positive Harris
recurrence of $GI/Ph/n+M$ queues when $n$ is fixed.
Appendix~\ref{sec:negdriftfluid} is devoted to the proof of a negative drift condition
for the fluid model.
Appendix~\ref{sec:negdriftdiff} uses this negative drift condition for the fluid
model to establish a negative drift condition for the diffusion-scaled processes.

\subsection*{Notation}
All random variables and stochastic processes are defined on a
common probability space $(\Omega,\mathcal{F},\mathbb{P})$ unless
otherwise specified.
For a positive integer  $d$, $\mathbb{R}^{d}$ denotes the
$d$-dimensional Euclidean space. Given a subset $S$ of some Euclidean space,
the space of right-continuous functions $f :\R_+
\rightarrow S$ with left limits in $(0,\infty)$ is denoted by $\D(\R_+,S)$ or simply
$\D(S)$. For a sequence of random elements $\{X_n: n =1, 2, \ldots\}$
taking values in a metric space, we write $X_n  \Rightarrow X$ to
denote the convergence of $X_n$ to $X$ in distribution. Each
stochastic process with sample paths in $\D(S)$ is considered to
be a $\D(S)$-valued random element. The space $\D(S)$ is assumed to be
endowed with the Skorohod $J_1$-topology.
Given $x\in\mathbb{R}$, we set $x^{+}=\max\{x,0\}$ and
$x^{-}=\max\{-x,0\}$.
All vectors are envisioned as column vectors. For a $K$-dimensional
vector $u$, we write $\left\vert u\right\vert$ for its Euclidean
norm. Given $ y \in \D(S)$ and $t>0$,
we set $\|y\|_{t} = \sup_{ 0 \le s \le t} |y(s)|$,
where $|\cdot|$ denotes the Euclidean norm in $S$.
Given a $K \times K$ matrix $M$, we use $M^{\prime}$ to denote
its transpose, and similarly for vector transposition.
We write $M_{ij}$ for its $(i,j)$th entry.
Let the matrix norm of $M$ be $|M|=\sum_{ij}
 |M_{i,j}|,$ where $|M_{ij}|$ is the absolute value of
$M_{ij}.$ We reserve $I$ for the $K\times K$ identity matrix and $e$
for the $K$-dimensional vector of ones.

\section{$GI/Ph/n+M$ queues and diffusion approximations}
\label{sec:giphn+m-queu-diff}
In Section \ref{sec:model-descr-prob}, we describe the $GI/Ph/n+M$
queueing model and
probabilistic assumptions and in Section \ref{sec:halfin-whitt-regime}
we state a diffusion limit under a many-server heavy traffic
condition.
\subsection{Model description and probabilistic assumptions}
\label{sec:model-descr-prob}

In a $GI/Ph/n+M$ system, there are $n$ identical servers.  We allow
some of the system parameters to change with $n$ as the number of
servers increases. For the rest of this section, we keep $n$ fixed.
Customers
arrive according to a delayed renewal process with interarrival times
given by $\{\xi^n (i): i=0, 1, 2, \ldots \}$. We assume that $\{\xi^n(i):
i=1,2, \ldots\}$ is a sequence of independent, identically
distributed (i.i.d.) random variables with a general distribution
and this sequence is  independent of
$\xi^n(0)$. Here $\xi^n(0)$ is the time that the first customer after time
$0$ is to arrive at the system.  Upon arrival, a customer enters service
immediately if an idle server is available. Otherwise, he waits in a
buffer with infinite waiting room that holds a first-in-first-out
(FIFO) queue. When a server completes a service, the server picks
the next customer from the FIFO queue if there is one
waiting. Otherwise, the server becomes idle.

The service times are i.i.d.\ random variables, following a phase-type
distribution which corresponds to the absorption time of a certain
transient continuous-time Markov chain (the $Ph$ in $GI/Ph/n+M$
notation). Specifically, let $p$ be a
$K$-dimensional nonnegative vector with entries summing up to one,
$\nu$ be a $K$-dimensional positive vector, and $P$ be a $K \times
K$ sub-stochastic matrix. We assume that the diagonal entries of $P$
are zero and $I-P$ is invertible. Consider a continuous-time Markov
chain with $K +1$ phases (or states) where phases $1, 2 \ldots, K$
are transient and phase $K +1$ is absorbing. The Markov chain has  the
initial distribution $p$ on $\{1, 2, \ldots, K\}$. Each time it enters
phase $k$, the
amount of time it stays
in phase $k$ is exponentially distributed with mean
${1}/{\nu_k}$. Each time it leaves phase $k$, the Markov chain enters
phase $j$ with probability $P_{kj}$ or enters phase $K+1$ with
probability $1-\sum_{j \le K}{P_{kj}}$. Once the Markov chain enters
state $K+1$, it stays there forever.
A phase-type random variable with parameters $(p, \nu, P)$ is defined
as the first time until the above continuous-time Markov chain reaches
state $K +1$.

As discussed in the introduction, each customer waiting in the queue
has a patience time.
The patience times are i.i.d.\ exponentially distributed with rate
$\alpha>0$. When a customer's
waiting time in queue exceeds his
patience time, the customer leaves the system without service.
Once a customer enters service, he does not abandon.
The sequence of interarrival times, service times, and abandon times
are assumed to be independent.

To describe the \textquotedblleft state\textquotedblright\ of the
system as time evolves, we let $Z_{k}^{n}(t)$ denote the number of
customers \emph{in phase $k$ service} in the $n$th system at time
$t$; service times in phase $k$ are exponentially distributed with
rate $\nu_{k}$. We use $Z^{n}(t)$ to denote the corresponding
$K$-dimensional vector. We call $Z^{n}=\{Z^{n}(t):t\geq0\}$ the
\emph{server-allocation process}. Let $N^{n}(t)$ denote the number
of customers in the $n$th system at time $t$, either in queue or in
service.
Let $A^n(t)$ be the ``age'' of the interarrival time at time $t$,
i.e., the time between the arrival time of the most recent arrival before time $t$
and time $t$.   Then,
$(A^n,N^n, Z^n)$ is called the state processes for the $n$th system.
It follows that for each $n$ fixed the state process
$(A^n,N^n, Z^n)$ has state space ${\cal S} =\R_+\times \Z_+ \times \Z_+^K$.
As time $t$ goes on, $A^n(t)$ increases linearly while $(N^n(t) ,
Z^n(t)) $ remains constant. When $A^n(t)$ reaches the interarrival for
the next arrival, it
instantaneous jumps to zero. We adopt the convention that all
processes are right continuous on $[0, \infty)$, having left limits in
$(0, \infty)$. It follows that  $(A^n,N^n, Z^n)$ is a
piecewise deterministic  Markov process that conforms to
Assumption~3.1 of \cite{dav84}, and hence it is a strong Markov process
\cite[page 362]{dav84}.
Throughout the paper, we make the following assumptions.
\begin{assumption} \label{ass:expinterarr}
The interarrival times $\{\xi^n (i): i=1, 2, \ldots \}$ from the
second customer onwards have the following representation:
$\xi^n(i)= \frac{1}{\lambda^n} u(i)$, $i\ge 1$, where $\{u(i):i\ge
1\}$ is an i.i.d.\ sequence with $\E(u(i))=1$ and
$\E(u(i))^2<\infty$.
\end{assumption}

\begin{assumption} \label{ass:discretestab}
For each $n$, the Markov process $(A^n, N^n, Z^n)$
is positive Harris recurrent and has stationary distribution $\pi^n$.
\end{assumption}

\jdnote{
Assumption \ref{ass:expinterarr} appears to be restrictive.  A
consequence of the assumption is that the squared coefficient of
variation (variance divided by squared mean) of interarrival times
does not depend on the scaling parameter $n$.  All results in this
paper can be shown to continue to hold if we adopt a triangular array of random
variables to model the family of interarrival time sequences and
impose some uniform integrability condition on the second moment; see,
for example, (3.4) of \cite{bra98b}. As discussed in the introduction,
the major motivation of this paper is to justify the diffusion model
procedure in \cite{DaiHe13}, which applies to any queueing
system with a fixed arrival process that has no scaling parameter.
We adopt the current assumption for notational simplicity without
affecting the relevance of our results. }

For the definition of positive Harris recurrence of a Markov process
\jdnote{in Assumption \ref{ass:discretestab}}, see, for example,
\cite{dai95a}.  \jdnote{Since our model allows customer abandonment,
  for each fixed $n$, the queueing system should be stable in some
  sense. However, to prove \tonnote{that} the Markov process is positive Harris
  recurrent, some conditions on the interarrival time distribution are
  needed. To keep our paper focused on the interchange of limits, we
  do not pursue the best sufficient conditions for positive Harris
  recurrence.} Proposition \ref{pro:positiveCurrence} in
Appendix~\ref{sec:posit-harr-recurr} provides a sufficient condition
on the interarrival distribution for Assumption \ref{ass:discretestab}
to hold.

\subsection{Halfin-Whitt regime and diffusion limits}
\label{sec:halfin-whitt-regime}
We describe a parameter regime, known as the Halfin-Whitt regime, that
was first introduced in \cite{HalWhi81}. When the parameters are in this regime, certain
diffusion processes serve as good approximations for $GI/Ph/n+M$
systems.  Most of the materials in this section can be found in
\cite{DaiHeTezcan10} and readers are referred there for more details.

We consider a sequence of $GI/Ph/n+M$ systems indexed by $n$. The
arrival rate $\lambda^n$ depends on $n$.
 We assume that
the service time and the patience time distributions do not depend on
$n$. Let $1/\mu$  denote the mean service time. Since service
time distribution is assumed to be phase type with parameters $(p,
\nu, P)$ and the
$i$th component of $(I-P')^{-1}p$ is the expected number of visits to
phase $i$ before the  Markov chain is absorbed into state
$K+1$, the mean service time has the following formula
\begin{equation}
  \label{eq:meanExpression}
    \frac{1}{\mu} = \sum_{i=1}^K \frac{1}{\nu_i} [(I-P')^{-1}p]_i.
\end{equation}
In vector form,   $1/\mu= e' R^{-1}p$,
where $e$ is the (column) vector of ones and
\begin{equation}
 R=(I-P^{\prime})\diag(\nu). \label{eq:R}%
\end{equation}
Define
\[
\rho^{n}=\frac{\lambda^{n}}{n\mu} \quad \text{and} \quad
\beta^n=\sqrt{n} (1 - \rho^n).
\]
The quantity $\rho^n$ is said to be the traffic intensity of the
$n$th system. We assume that
\begin{equation}\label{eq:halfinwhitt}
\lim_{n\to\infty} \beta^n=\beta \quad\text{ for some
}\beta\in\mathbb{R}.
\end{equation}
When condition (\ref{eq:halfinwhitt}) is satisfied, the sequence of
systems is critically loaded,
and is said to be in the Quality-and-Efficiency-Driven (QED) regime
or the Halfin-Whitt regime.

 Setting
\begin{equation}
X^{n}(t)=N^{n}(t)-n\quad\ \text{for }t\geq0, \label{eq:X}%
\end{equation}
we call $X^{n}=\{X^{n}(t):t\geq0\}$ the \emph{centered total-customer-count
process} in the $n$th system. One can check that $(X^{n}(t))^{+}$ is
the number of customers waiting in queue at time $t$, and
$(X^{n}(t))^{-}$ is the number of idle servers at time $t$. Clearly,
\[
e^{\prime}Z^{n}(t)=n-(X^{n}(t))^{-}\quad\ \text{for }t\geq0.
\]
Now we define the diffusion-scaled state processes:
\begin{equation}
\tilde{A}^{n}(t)=\frac{1}{\sqrt{n}}A^{n}(t),\qquad \tilde{X}^{n}(t)=\frac{1}{\sqrt{n}}X^{n}(t)\quad\ \text{for }t\geq0.
\label{eq:tildeXn}%
\end{equation}
and
\begin{equation}\label{eq:tildeZn}
\tilde{Z}^{n}(t)=\frac{1}{\sqrt{n}}{( {Z}^{n}(t) -n\gamma )}\quad\
\text{for }t\geq 0, \end{equation} where
\begin{equation}
  \label{eq:gamma}
\gamma=\mu R^{-1}p,
\end{equation}
which is a $K$-dimensional vector since $\mu\in\R_+$.
It follows from (\ref{eq:meanExpression}) that
$\sum_{k=1}^{K}\gamma_{k}=1$. One interprets
$\gamma_{k}$ to be the fraction of busy servers in phase $k$ service.
We write $\tS^n$ for the state space of $(\tilde A^n,\tilde X^n,\tilde Z^n)$:
\[
\tS^n = \{(a,x,z) \in \R_+\times \R\times \R^K: \sqrt{n} x +n \in
\Z_+,\sqrt{n} z + n\gamma\in \Z_+^K, e'z+x^-=0\}.
\]

The following result  establishes a
many-server diffusion limit  for a sequence of  $GI/Ph/n+M$ systems
that operate in the Halfin-Whitt regime. Because the abandonment rate
$\alpha$ is positive, the limit process is positive recurrent.
\begin{proposition}
\label{thm:critical} Consider a sequence of $GI/Ph/n+M$ systems
satisfying Assumption \ref{ass:expinterarr} and
(\ref{eq:halfinwhitt}). Assume that $\xi^n(0)=0$ and
$(\tilde{X}^{n}(0),\tilde{Z}^{n}(0))\Rightarrow(\tilde{X}(0),\tilde{Z}%
(0))$ for a pair of random variables $(\tilde{X}(0),\tilde{Z}(0)).$
We then have
\begin{enumerate}
\item[(a)]
\begin{equation}\label{eq:tildeXZ}
(\tilde{X}^{n},\tilde{Z}^{n})\Rightarrow(\tilde{X},\tilde{Z})\quad\text{
as }n\rightarrow\infty,
\end{equation} where $(\tilde{X},\tilde{Z})$
is some continuous time Markov process on
\begin{equation}
\label{eq:deftildeS}
\tS = \{ (x,z)\in \R\times \R^K: e' z + {x}^-=0\}.
\end{equation}
\item[(b)] The Markov process
$({\tilde X},{\tilde Z})$ given in (\ref{eq:tildeXZ}) is positive
recurrent and has a unique stationary distribution $\pi$.
\end{enumerate}
\end{proposition}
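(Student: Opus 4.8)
The plan is to reconstruct the two statements along the lines of \cite{DaiHeTezcan10} (part (a)) and \cite{diekergao:OU} (part (b)).

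\emph{Part (a).} I would start from a stochastic-primitive representation: a renewal counting process $E^n$ for arrivals, independent unit-rate Poisson processes driving the phase-$k$ service clocks thinned by the routing matrix $P$, and an independent unit-rate Poisson process for abandonment. Writing $(N^n,Z^n)$ as functionals of these primitives under the FIFO non-idling policy, centering by $n$ and $n\gamma$ and dividing by $\sqrt n$, I would use the fluid limits $Z^n/n\to\gamma$ and $N^n/n\to 1$ (law of large numbers together with \eqref{eq:halfinwhitt}) to linearize the intensities: the phase-transition-and-completion terms contribute the drift $-R\tilde Z^n$ with $R$ as in \eqref{eq:R}, the abandonment term contributes $-\alpha(\tilde X^n)^+$, and the arrival deficit $\sqrt n(1-\rho^n)=\beta^n$ contributes a constant drift. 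Time-changed FCLTs for the renewal and Poisson inputs give a Brownian limit with an explicitly computable covariance, and the non-idling identity $e'Z^n = n-(X^n)^-$ passes to $e'\tilde Z^n = -(\tilde X^n)^-$, so the scaled state stays on $\tS$. A standard oscillation/tightness estimate for the pre-limit semimartingales, together with the continuous mapping theorem applied through the lifting map of Lemma~\ref{lem:Phi}, then yields \eqref{eq:tildeXZ}, with the limit the unique solution of the associated piecewise-OU stochastic differential equation on $\tS$; well-posedness of that equation gives the strong Markov property of $(\tilde X,\tilde Z)$.

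\emph{Part (b).} The limit is a diffusion on $\tS$ whose drift is linear, governed by one matrix on $\{x>0\}$ and by another on $\{x\le 0\}$ (the term $\alpha x^+$ switching off), so positive recurrence reduces to a Foster--Lyapunov/Has'minskii argument: exhibit $V\ge 0$ with $V(x,z)\to\infty$ and a compact set $C$ with $\mathcal{L}V\le -1$ off $C$, where $\mathcal{L}$ is the generator of the limit diffusion. Since $P$ is substochastic with zero diagonal and $I-P$ is invertible, $R=(I-P')\diag(\nu)$ is a nonsingular M-matrix, hence positive stable, so the Lyapunov equation $QR+R'Q=I$ admits a symmetric positive-definite solution $Q$; moreover $Q$ can be chosen as a \emph{common} quadratic Lyapunov matrix for the two drift matrices appearing in the two regions. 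The construction in \cite{diekergao:OU} combines $z'Qz$ with a term controlling the $x$-direction --- exploiting the genuine negative drift $-\alpha x^+$ on $\{x>0\}$ and the constraint $e'z=-x^-$ on $\{x\le 0\}$, with care at the interface $\{x=0\}$ --- to obtain the required drift inequality. Uniqueness of $\pi$ then follows because the diffusion matrix is nondegenerate in the coordinates parametrizing the $K$-dimensional manifold $\tS$ (arrivals inject noise into $\tilde X$, the $K$ service clocks into $\tilde Z$), so the process has a transition density and is irreducible and strong Feller; combined with the existence of a stationary measure this forces a unique stationary distribution.

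\emph{Main obstacle.} In part (a), the delicate point is the state-space-collapse/continuity argument underlying Lemma~\ref{lem:Phi}: recovering the $K$-dimensional phase vector $\tilde Z$ continuously from the driving data subject to the single linear constraint. In part (b), the crux --- already flagged in the introduction --- is that $\mathcal{L}V$ is only ``obviously'' negative when the waiting content $x^+$ is large; when the state is large only because of an imbalance of servers across phases one must extract the negative drift from the common quadratic $z'Qz$, which is precisely where positive stability of $R$ enters.
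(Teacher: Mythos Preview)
Your proposal is correct and follows the same route as the paper: part (a) via \cite{DaiHeTezcan10} and part (b) via \cite{diekergao:OU}. The paper's own proof, however, does not reconstruct these arguments at all---it simply cites the two references, the only added observation being that Assumption~\ref{ass:expinterarr} yields the functional CLT \eqref{eq:Eclt} for the arrival process required in \cite{DaiHeTezcan10}; so your sketch is considerably more detailed than what the paper actually records.
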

\begin{proof}
Part (a)  follows  from  \cite{DaiHeTezcan10}.
It is assumed in \cite{DaiHeTezcan10} that as $n\rightarrow\infty$,
\begin{equation}
\tilde{E}^{n}    \Rightarrow\tilde{E}\quad\text{ for some Brownian
motion $\tilde{E}$,}
\label{eq:Eclt}%
\end{equation}
where
\begin{equation} \label{eq:tildeEn}
\tilde{E}^{n}(t)=\frac{1}{\sqrt{n}} [E^{n}(t)-\lambda^{n}t],
\end{equation}
with ${E}^{n}(t)$ being the number of arrivals in $(0, t]$. Under
Assumption \ref{ass:expinterarr} on the interarrival times, it
follows from the functional central limit theorem
that (\ref{eq:Eclt}) holds.

Part (b) is an immediate corollary of Theorem 3 in
\cite{diekergao:OU}.
\end{proof}



\section{The main result}
\label{sec:main-result}

In this section, we work under Assumption \ref{ass:discretestab}.
Let $\tilde \pi_n$ be  the stationary
distributions  of the diffusion-scaled state process
$(\tilde A^n, \tilde X^n, \tilde Z^n)$ defined in (\ref{eq:tildeXn}) and
(\ref{eq:tildeZn}).  
Now we state the main theorem of this paper.

\begin{theorem} \label{thm:tightness}
\gaonote{Suppose that Assumptions \ref{ass:expinterarr} and
\ref{ass:discretestab} and the many-server heavy traffic
condition (\ref{eq:halfinwhitt}) hold. Assume that the abandonment
rate $\alpha$ is strictly positive.
Then the sequence of probability distributions $\{\tilde \pi^n: n \ge 1 \}$ is tight.}
\end{theorem}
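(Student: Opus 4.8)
The plan is to follow the Gamarnik-Zeevi / Gurvich template adapted to the many-server-with-abandonment setting. The goal is tightness of $\{\tilde\pi^n\}$, i.e., tightness of the random vectors $(\tilde A^n(\infty),\tilde X^n(\infty),\tilde Z^n(\infty))$ under the stationary law. Since $\tilde A^n(\infty)$ is just the scaled stationary age of a renewal process and converges to $0$ (its unscaled stationary distribution has finite mean under Assumption \ref{ass:expinterarr}), the real content is tightness of $(\tilde X^n(\infty),\tilde Z^n(\infty))$. The strategy is to exhibit a Lyapunov function $g_\beta$ on the state space such that, for every sufficiently large $n$, the generator $\mathcal G^n$ of the Markov process $(A^n,N^n,Z^n)$ applied to $e^{\theta g_\beta}$ satisfies a geometric drift inequality of the form $\mathcal G^n e^{\theta g_\beta}(s) \le -\kappa\, e^{\theta g_\beta}(s) + b\,\mathbf 1_{\{g_\beta(s)\le M\}}$, uniformly in $n$, for fixed constants $\theta,\kappa,b,M>0$. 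Integrating this inequality against the stationary distribution $\pi^n$ (so that the left side vanishes) yields $\E_{\pi^n}[e^{\theta g_\beta}] \le b/\kappa$, a bound uniform in $n$; since $g_\beta$ dominates (a constant times) $|(\tilde X^n,\tilde Z^n)|$ on the scaled state space, a uniform exponential-moment bound gives tightness, indeed uniform exponential integrability.

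To get the geometric drift inequality I would proceed in two stages. First, in the spirit of Section \ref{sec:Lyapunovfluid}, I would establish a \emph{fluid-model} negative-drift estimate: the function $g_\beta$ (built from the common quadratic Lyapunov function of \cite{diekergao:OU} on the drift regions of $\tilde\X$, patched together and made to grow linearly in $|(x,z)|$ far from the origin) decreases along fluid trajectories at a rate proportional to $|(x,z)|$ when $|(x,z)|$ is large — this is exactly part (b) of Lemma~\ref{lem:gbetaflu}. The two cases here are: (i) the waiting fluid $x^+$ is large, where abandonment at rate $\alpha>0$ drains the queue and gives the drift; (ii) $x^+$ is moderate but $|z|$ is large because of imbalance across service phases, where the draining comes from the spectral structure of $R$ (equivalently from the common quadratic form). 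Second, I would transfer this fluid negative-drift condition to the prelimit diffusion-scaled processes — Appendix~\ref{sec:negdriftdiff} — obtaining Lemma~\ref{lem:abddrift}: for large $n$, a multistep/averaged version of the drift inequality holds for $(\tilde A^n,\tilde X^n,\tilde Z^n)$ uniformly in the starting state with $g_\beta$ large. The passage from a one-step/infinitesimal fluid statement to a genuine geometric drift for the Markov generator is handled via the standard device of bounding $\mathcal G^n e^{\theta g}$ using a Taylor expansion: the linear term contributes the negative fluid drift (scaled), and the quadratic and jump-correction terms are $O(1/\sqrt n)$ smaller because jumps of the scaled process are $O(1/\sqrt n)$ and jump rates are $O(n)$, so for $\theta$ small enough and $n$ large enough the negative linear term dominates.

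The main obstacle is case (ii): showing that a large imbalance among service phases produces a genuine negative drift in the Lyapunov function even when there is essentially no queue and hence no direct benefit from abandonment. In that regime the dynamics of $\tilde Z^n$ are governed by the linear map $R$ (through $P$ and $\diag(\nu)$), which need not be symmetric or normal, so a naive quadratic $|z|^2$ need not have negative drift; one must instead use the \emph{common} quadratic form $z'Az$ with $A\succ 0$ solving the appropriate Lyapunov equation(s) simultaneously for both pieces of the piecewise-linear drift, exactly as in \cite{diekergao:OU}. Making this quadratic-form argument interact correctly with the linear growth of $g_\beta$ at infinity, and checking that the patching between the $x>0$ region (where queue content and abandonment matter) and the $x\le0$ region (where idle servers matter) does not destroy the drift near the interface, is the delicate part. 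A secondary technical nuisance is that the map in Lemma~\ref{lem:Phi} is Lipschitz only with a time-dependent constant, so the fluid-to-diffusion transfer must be done over time windows of fixed length rather than appealing to a uniform-in-time Skorohod-map bound; choosing that window length correctly and controlling the error terms there is where most of the routine-but-careful work lives, and is precisely why the \cite{budhiraja2009stationary}-style argument does not transfer directly.
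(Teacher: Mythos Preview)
Your high-level architecture is right and matches the paper: build a Lyapunov function from the common quadratic form of \cite{diekergao:OU}, establish a fluid negative-drift statement (Lemma~\ref{lem:gbetaflu}), transfer it to the prelimit (Lemma~\ref{lem:abddrift}), and then integrate against $\tilde\pi^n$. You also correctly isolate the hard case (phase imbalance with no queue) and the reason the Budhiraja--Lee route is blocked (time-dependent Lipschitz constant of $\Psi$).

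Where you diverge from the paper is in the \emph{mechanism} of the transfer step, and this is where there is a genuine gap. You propose to bound the infinitesimal generator $\mathcal{G}^n$ applied to $e^{\theta g_\beta}$, controlling the remainder via the heuristic ``jumps of size $O(1/\sqrt{n})$ at rates $O(n)$.'' That heuristic is fine for service and abandonment transitions, but it breaks for the arrival transition: the arrival process is \emph{renewal}, not Poisson, so its contribution to the generator is $h(A^n(t))\bigl[f(\cdot+\text{jump})-f(\cdot)\bigr]$ with $h$ the interarrival hazard. Under Assumption~\ref{ass:expinterarr} alone (only two moments of $u(1)$), $h$ is not pointwise controlled and need not even be bounded, so a pointwise Foster--Lyapunov inequality $\mathcal{G}^n e^{\theta g_\beta}\le -\kappa\, e^{\theta g_\beta}+b\,\mathbf 1_{B}$ cannot be obtained without either an extra hypothesis on $h$ or building the age nontrivially into the Lyapunov function. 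The paper sidesteps this entirely: it never touches $\mathcal{G}^n$. Instead it proves a \emph{time-$t_0$} drift inequality by writing both $(\tilde X^n,\tilde Z^n)=\Psi(\tilde U^n,\tilde V^n)$ and the fluid model $(\tilde x^n,\tilde z^n)=\Psi(\tilde u^n,\tilde v^n)$ through the \emph{same} map $\Psi$, bounding $\E_{(a,x,z)}\|\tilde U^n-\tilde u^n\|_{t_0}$ and $\E_{(a,x,z)}\|\tilde V^n-\tilde v^n\|_{t_0}$ by $C+C\,g(x,z)^{1/4}$ (Lemma~\ref{lem:unifmombd}; the arrival process enters only through $\|\tilde E^n\|_{t_0}$, whose second moment is uniformly bounded by standard renewal estimates with no hazard condition), and then invoking Lipschitz continuity of $\sqrt{g}$ and of $\Psi$ on $[0,t_0]$. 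The $g^{1/4}$ error is dominated by the fluid drift of order $g^{1/2}$, yielding Lemma~\ref{lem:abddrift}. A second, smaller difference: the paper works with $\sqrt{g}$ itself (not $e^{\theta g_\beta}$) and obtains only a uniform first-moment bound $\E_{\tilde\pi^n}[\sqrt{g}]\le C/\epsilon$ via a truncation-and-Fatou argument; this already suffices for tightness and avoids having to establish exponential moments for the prelimit stationary laws.
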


The following corollary states the validity of interchange of heavy-traffic
and steady-state limits. Because \cite{diekergao:OU} proved
  that $(\tilde X, \tilde Z)$ has a
unique stationary distribution, the corollary follows from Theorem \ref{thm:tightness}
by a standard argument; see \cite{GamarnikZeevi06} and
\cite{budhiraja2009stationary}.
\begin{corollary} \label{thm:interchange}
\gaonote{Under the assumptions of Theorem~\ref{thm:tightness},
the sequence of marginal distributions of  $\tilde \pi^n$  on
$(\tilde {X}^{n},\tilde Z^n)$
 converges weakly to $\pi$, where $\pi$ is the
unique stationary distribution of $(\tilde{X},\tilde{Z})$ and
$(\tilde{X},\tilde{Z})$ is the diffusion process from
Proposition \ref{thm:critical}.}
\end{corollary}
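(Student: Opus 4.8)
The plan is to derive Corollary~\ref{thm:interchange} from Theorem~\ref{thm:tightness} by the classical subsequence argument: tightness of the prelimit stationary laws, together with uniqueness of the stationary law of the diffusion (Proposition~\ref{thm:critical}(b), from \cite{diekergao:OU}), forces the prelimit stationary laws to converge to it; see \cite{GamarnikZeevi06,budhiraja2009stationary} for this scheme. Concretely, view each $\tilde\pi^n$ as a Borel probability measure on the common Polish space $\R_+\times\R\times\R^K$. By Theorem~\ref{thm:tightness} the family $\{\tilde\pi^n\}$ is tight, so by Prohorov's theorem every subsequence has a further subsequence $\{\tilde\pi^{n_k}\}$ with $\tilde\pi^{n_k}\Rightarrow\mu$ for some probability measure $\mu$. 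It then suffices to prove that the marginal $\bar\mu$ of $\mu$ on the last two coordinates equals $\pi$, since this identifies the unique possible subsequential limit of the $(\tilde X^n,\tilde Z^n)$-marginals of $\tilde\pi^n$.

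First I would identify the structure of $\mu$. The scaled age $\tilde A^n=A^n/\sqrt n$ is autonomous, and under $\tilde\pi^n$ its law is that of the equilibrium age of a renewal process with interarrival mean $1/\lambda^n$ and finite second moment (Assumption~\ref{ass:expinterarr}); this equilibrium age has mean of order $1/\lambda^n=O(1/n)$, hence $\tilde A^n\Rightarrow 0$ under $\tilde\pi^n$, so the age-marginal of $\mu$ is $\delta_0$. For the other two coordinates, the prelimit satisfies the exact constraint $e'\tilde Z^n(t)+(\tilde X^n(t))^-=0$ (obtained by scaling $e'Z^n(t)=n-(X^n(t))^-$ and using $e'\gamma=1$); this set is closed, so $\bar\mu$ is supported on $\tS$ of (\ref{eq:deftildeS}), and $\mu=\delta_0\otimes\bar\mu$.

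Next I would show that $\bar\mu$ is a stationary distribution of $(\tilde X,\tilde Z)$. Run the $n_k$th system in stationarity, so that $(\tilde A^{n_k}(0),\tilde X^{n_k}(0),\tilde Z^{n_k}(0))\sim\tilde\pi^{n_k}\Rightarrow\delta_0\otimes\bar\mu$, and in particular $(\tilde X^{n_k}(0),\tilde Z^{n_k}(0))\Rightarrow\bar\mu$. Although Proposition~\ref{thm:critical} is stated with $\xi^n(0)=0$, its proof in \cite{DaiHeTezcan10} uses the arrival stream only through the FCLT (\ref{eq:Eclt}), and (\ref{eq:Eclt}) still holds for the delayed renewal process started from the (order $1/n$) stationary age, since such a delay perturbs the arrival count by $O_p(1)$ on compact time sets. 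Hence $(\tilde X^{n_k},\tilde Z^{n_k})\Rightarrow(\tilde X,\tilde Z)$ in $\D$, where the limit diffusion is started from $\bar\mu$. Since the limit has continuous paths, the evaluation map at any fixed $t\ge0$ is a.s.\ continuous at the limit, so $(\tilde X^{n_k}(t),\tilde Z^{n_k}(t))\Rightarrow(\tilde X(t),\tilde Z(t))$; but by stationarity the left-hand side has law $\bar\mu$ for every $k$, whence $(\tilde X(t),\tilde Z(t))\sim\bar\mu$ for all $t\ge0$. This is exactly the statement that $\bar\mu$ is stationary for $(\tilde X,\tilde Z)$.

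Finally, Proposition~\ref{thm:critical}(b) (i.e.\ Theorem~3 of \cite{diekergao:OU}) gives that $(\tilde X,\tilde Z)$ has a unique stationary distribution $\pi$; therefore $\bar\mu=\pi$. Since the subsequence was arbitrary, the $(\tilde X^n,\tilde Z^n)$-marginals of $\tilde\pi^n$ converge weakly to $\pi$, which is the assertion of the corollary. I expect the only real friction to be the initial-age issue just described — making rigorous that starting the prelimit in stationarity is compatible with the limit theorem of Proposition~\ref{thm:critical} despite the hypothesis $\xi^n(0)=0$ there; everything else (Prohorov, passing a closed constraint to the limit, continuity of finite-dimensional projections for processes with continuous limits, and invoking uniqueness) is routine.
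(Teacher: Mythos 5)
Your proposal is correct and is precisely the ``standard argument'' that the paper itself invokes without writing out: tightness (Theorem~\ref{thm:tightness}) plus Prohorov, identification of any subsequential limit as a stationary law of $(\tilde X,\tilde Z)$ via the process-level limit, and uniqueness from \cite{diekergao:OU}. Your handling of the two subtleties the paper leaves implicit --- that $\tilde A^n\Rightarrow 0$ under $\tilde\pi^n$ (which the paper also uses in its proof sketch of Theorem~\ref{thm:tightness}) and that the stationary initial age is compatible with Proposition~\ref{thm:critical} because the FCLT (\ref{eq:Eclt}) is insensitive to an $O(1/n)$ delay --- is sound.
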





\section{Our Lyapunov function and a fluid model}
\label{sec:Lyapunovfluid}

In this section, we first introduce the Lyapunov function
that lies at the heart of this paper.
We then introduce a fluid model
associated with the sequence of $GI/Ph/n+M$ systems, and assert
that our Lyapunov function is a geometric Lyapunov function for the fluid model.
The proof of our main result relies on a comparison between the diffusion-scaled processes
and the fluid model.

To define our Lyapunov function, we first introduce a lemma on so-called common
quadratic Lyapunov functions. The lemma was established in Theorem~1
of \cite{diekergao:OU}. Recall that $p$ is a discrete probability
distribution on $\{1, \ldots, K\}$, representing the initial
distribution of the phase type service time distribution, $e$ is a
vector of ones, and $R$ is a matrix given in (\ref{eq:R}) and is
obtained from the parameters of the phase-type service time
distribution.
\begin{lemma} \label{lem:cqlf}
There exists a  $K\times K$ positive definite matrix $Q$ such that
\begin{align*}
&QR+R'Q \quad \text{is positive definite}, \\
&Q(I-pe')R+ R'(I - ep')Q \quad \text{is positive semi-definite}.
\end{align*}
\end{lemma}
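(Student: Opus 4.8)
The plan is to reduce the two matrix conditions to a single statement about the stability of a related matrix and then invoke a Lyapunov-type argument. First I would recall the structural facts about $R=(I-P')\diag(\nu)$: since $P$ is substochastic with zero diagonal, $I-P$ is invertible, and $\diag(\nu)$ has strictly positive diagonal, the matrix $R$ is a nonsingular $M$-matrix (up to transposition/sign conventions), and in particular $-R$ is Hurwitz, i.e.\ all eigenvalues of $-R$ have strictly negative real part. This is the analytic engine: by the classical Lyapunov theorem, for any positive definite $W$ there is a unique positive definite $Q$ solving $QR+R'Q=W$, which immediately gives the first condition. The work is to show that $Q$ can be chosen so that the \emph{second} condition holds simultaneously.

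The key observation is that $(I-pe')$ is a projection-like matrix: since $e'p=1$ (the entries of $p$ sum to one), we have $(I-pe')p=0$ and $e'(I-pe')=0$, so $e$ spans the left null space and $p$ the right null space of $I-ep'$ and $I-pe'$ respectively. Thus $R'(I-ep')$ has $e$ as a zero eigenvector on the appropriate side, and the quadratic form $z'\big(Q(I-pe')R+R'(I-ep')Q\big)z$ automatically vanishes in the direction tied to $p$ (resp.\ degenerates along $e$). So positive semidefiniteness — rather than definiteness — is exactly the right thing to hope for, and the burden is to rule out \emph{negative} values. Here I would use the probabilistic meaning of $(I-pe')R$: it is (a version of) the generator of the phase process \emph{with the restart} $p$ folded in, which is again a stable/conservative object, so $-(I-pe')R$ inherits a Hurwitz-type property on the complement of the degenerate direction. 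Restricting the Lyapunov equation to that invariant subspace and solving there yields a $Q$ that makes the second form positive semidefinite, and one checks this $Q$ can be taken to also satisfy the first (strict) condition, e.g.\ by a perturbation/averaging of the two Lyapunov solutions.

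Concretely, the steps in order would be: (i) establish that $-R$ is Hurwitz from the $M$-matrix structure of $R$; (ii) establish the analogous stability statement for $-(I-pe')R$ on the subspace orthogonal to the null direction, using $e'p=1$; (iii) solve the Lyapunov equation associated with $(I-pe')R$ to get a candidate $Q_0\succeq 0$ realizing the second (semidefinite) condition; (iv) solve $QR+R'Q\succ 0$ and argue that a suitable combination $Q=Q_0+\varepsilon Q_1$ (or a direct simultaneous construction) satisfies both, using that the set of $Q$ satisfying the strict first condition is open. The main obstacle is step (ii)–(iii): the matrix $(I-pe')R$ is not symmetric and has a genuine null space, so one cannot directly quote the standard positive-definite Lyapunov theorem. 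One must carefully identify the invariant subspace, verify the restricted map is stable, and then check that the semidefinite solution lifts back consistently with the strict first condition — in particular that the two conditions are not in conflict along the degenerate direction. This is precisely the delicate point handled in Theorem~1 of \cite{diekergao:OU}, and I would follow that construction.
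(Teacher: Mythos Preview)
The paper does not supply its own proof of this lemma; it simply records that the result ``was established in Theorem~1 of \cite{diekergao:OU}.'' Your proposal ultimately lands in the same place, explicitly deferring the delicate simultaneous construction to that reference, so there is no divergence to discuss. Your sketch of the surrounding ideas (Hurwitz property of $-R$, the null direction of $(I-pe')R$, a subspace Lyapunov argument) is a reasonable roadmap and in fact goes further than the paper, which offers no outline at all; just be aware that the combination step $Q=Q_0+\varepsilon Q_1$ is exactly where a naive argument can fail, since the $Q_0$-form is only semidefinite along the $\gamma$-direction and a perturbation can push it negative there --- which is why, as you note, one really does need the specific construction from \cite{diekergao:OU}.
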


Our Lyapunov function is the square root of a function $g=g_\beta$ that depends on
the ``slack parameter'' $\beta$ in (\ref{eq:halfinwhitt}).
It is defined as
\begin{eqnarray} \label{eq:gbeta}
g(x,z)= \left\{\begin{array} {ll}
(x+ \beta)^2 + \kappa (z+\beta \gamma)'Q(z+\beta \gamma), & \text{if} \quad \beta \ge 0,\\
(\alpha x + \mu \beta)^2 + \kappa (z+\beta \gamma)'Q(z+\beta
\gamma), & \text{if} \quad \beta<0,
\end{array}\right.
\end{eqnarray}
where $Q$ is given in Lemma~\ref{lem:cqlf}, $\gamma$ is defined in
(\ref{eq:gamma}) and $\kappa$ is a large positive constant to be
determined later.

The quadratic function in (\ref{eq:gbeta}) is slightly different from
the one used in \cite{diekergao:OU} in their Lyapunov function.
Our modification is needed because \cite{diekergao:OU} focused
on a $K$-dimensional state process, but the current paper focuses on the
degenerate $(K+1)$-dimensional state process $(\tilde X, \tilde Z)$
on the manifold $\tS$; both state processes are equivalent.
More importantly, the centering for $(x, z)$ in the two quadratic
terms is different. Our centering allows us to obtain a stronger property for the fluid model; see part (a) of Lemma~\ref{lem:gbetaflu}.

\gaonote{ In addition, it is clear that when $\beta \ge 0$, the function $g$ achieves its minimum value 0 at $(x^*, z^*) = (-\beta, -\beta \gamma) \in \tS$. For $\beta<0$, the function $g$ achieves its minimum value $\kappa \beta^2 \gamma' Q \gamma$ at $(\bar x^*, \bar z^*) = (-\frac{\mu \beta}{ \alpha}, 0) \in \tS$. This can be verified by rewriting $g(x,z)$ as follows:
\begin{eqnarray*} \label{eq:gbeta-re}
g(x,z) &=&  (\alpha x + \mu \beta)^2 + \kappa z'Qz +  \kappa \beta^2 \gamma' Q \gamma + 2 \kappa\beta z' Q\gamma \\
& = & (\alpha x + \mu \beta)^2 + \kappa z'Qz +  \kappa \beta^2 \gamma' Q \gamma + 2 \kappa\beta b z'e \\
& = & (\alpha x + \mu \beta)^2 + \kappa z'Qz +  \kappa \beta^2 \gamma' Q \gamma + 2 \kappa b (-\beta) \cdot x^{-},
\end{eqnarray*}
where the second equality follows from Lemma~\ref{lem:Qgammaequalse} with constant $b>0$, and in the last equality we have used the fact that $(x, z) \in \tS$.
 We remark that
 if $\beta<0$, both $\sqrt{g}$ and $\sqrt{g} - \sqrt{\kappa \beta^2 \gamma' Q \gamma}$ can serve as our Lypaunov function. Those two functions only differ by a constant and the function $\sqrt{g} - \sqrt{\kappa \beta^2 \gamma' Q \gamma}$ has a minimum value of zero on the manifold $\tS$. We use $\sqrt{g}$ as our Lyapunov function for the sake of clarity of the presentation.
}

We now introduce a fluid model associated with
$GI/Ph/n+M$ systems. This fluid  model is defined through a map, which
is established in a more general setting in \cite{DaiHeTezcan10}.
Recall the definition of $\tS$ in (\ref{eq:deftildeS}) and write
\[
\tT = \{ (u,v)\in \R\times \R^K: e' v = 0\}.
\]

\begin{lemma}
\label{lem:Phi}
Let $\alpha>0$.
\begin{enumerate}
\item[(a)] For each
$(u,v)\in\D(\tT)$, there exists a unique $(x,z)\in\D(\tS)$ such that
\begin{eqnarray}
x(t)&=&u(t)-\alpha\int_{0}^{t}(x(s))^{+}\,ds-e^{\prime}R\int_{0}%
^{t}z(s)\,ds,\label{eq:mapPhi1}\\
z(t)&=&v(t)-p(x(t))^{-}-(I-pe^{\prime})R\int_{0}^{t}z(s)\,ds
\label{eq:mapPhi2}%
\end{eqnarray}
for $t\geq0$.
\item[(b)] For each $(u,v)\in\D(\tT)$,
define $(x,z)=\Psi(u,v)\in\D(\tS)$, where
$(x,z)$ satisfies (\ref{eq:mapPhi1}) and (\ref{eq:mapPhi2}). The map
$\Psi$ is well-defined and is continuous when both the domain $\D(\tT)$ and
the range $\D(\tS)$ are endowed with the
standard Skorohod $J_{1}$-topology.
\item[(c)] The map $\Psi$ is Lipschitz
continuous in the sense that for any $T>0$, there exists a constant
$C=C(T)>0$ such that
\[
\|\Psi(y^{1})-\Psi(y^{2}) \|_T \leq C \|y^{1}-y^{2}\|_T \quad\text{
for any }y^{1},y^{2}\in\D(\tT).
\]
\item[(d)] The map $\Psi$ is positively homogeneous in the sense that
\[
\Psi(by)=b\Psi(y)\quad\text{ for each }b>0\text{ and each }y\in\D(\tT).
\]
\end{enumerate}
\end{lemma}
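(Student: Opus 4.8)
The plan is to build $\Psi$ by a contraction-mapping argument that decouples the two equations; this gives (a) and, as byproducts, the Lipschitz estimate (c) and the homogeneity (d). The genuinely delicate point is the $J_1$-continuity in (b), which I would ultimately read off from the general fluid-map construction of \cite{DaiHeTezcan10} (our $\Psi$ being the specialization of their many-server map to service-time data $(p,\nu,P)$).

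For (a), fix $(u,v)\in\D(\tT)$ and regard $z$ as given; then (\ref{eq:mapPhi1}) becomes $x=\phi-\alpha\int_0^\cdot (x(s))^+\,ds$ with $\phi(t):=u(t)-e'R\int_0^t z(s)\,ds$. Since $x\mapsto \phi-\alpha\int_0^\cdot x^+\,ds$ has a $k$-fold iterate with $\|\cdot\|_T$-Lipschitz constant $(\alpha T)^k/k!$ for every $T$, it has a unique fixed point $x=:F(z)$, and a Gr\"onwall estimate gives $\|F(z^1)-F(z^2)\|_t\le |e'R|e^{\alpha t}\int_0^t\|z^1-z^2\|_s\,ds$. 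Substituting into (\ref{eq:mapPhi2}) leaves the single equation $z=G(z)$, with $G(z)(t):=v(t)-p\,(F(z)(t))^- -(I-pe')R\int_0^t z(s)\,ds$, and the bound on $F$ yields $\|G(z^1)-G(z^2)\|_t\le C(T)\int_0^t\|z^1-z^2\|_s\,ds$, so again a finite iterate is a $\|\cdot\|_T$-contraction and $G$ has a unique fixed point $z^*$; then $(x^*,z^*):=(F(z^*),z^*)$ is the unique solution of (\ref{eq:mapPhi1})--(\ref{eq:mapPhi2}). It has right-continuous paths with left limits because the integral terms are continuous, and it lies in $\D(\tS)$ since premultiplying (\ref{eq:mapPhi2}) by $e'$ and using $e'v\equiv 0$ on $\tT$, $e'p=1$, and $e'(I-pe')=0$ gives $e'z^*(t)=-(x^*(t))^-$ for all $t$.

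Parts (c) and (d) are then short. For (c), with $(x^i,z^i)=\Psi(y^i)$ I would subtract the two copies of (\ref{eq:mapPhi1})--(\ref{eq:mapPhi2}) and use that $x\mapsto x^{\pm}$ is $1$-Lipschitz to get, for $\delta(t):=|x^1(t)-x^2(t)|+|z^1(t)-z^2(t)|$, an inequality $\delta(t)\le (2+|p|)\,\|y^1-y^2\|_T+C_0\int_0^t\delta(s)\,ds$ on $[0,T]$; Gr\"onwall then gives $\|\Psi(y^1)-\Psi(y^2)\|_T\le C(T)\|y^1-y^2\|_T$ with $C(T)$ of the form $(\mathrm{const})\,e^{(\mathrm{const})\,T}$, which is exactly the time-dependent Lipschitz constant mentioned in the introduction (and which re-proves uniqueness). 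For (d), note that (\ref{eq:mapPhi1})--(\ref{eq:mapPhi2}) are invariant under $(u,v,x,z)\mapsto(bu,bv,bx,bz)$ for $b>0$, because $(bx)^{\pm}=b\,x^{\pm}$ and all remaining terms are linear; hence if $(x,z)=\Psi(y)$ then $(bx,bz)$ solves the system driven by $by$, and uniqueness forces $\Psi(by)=b\,\Psi(y)$.

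The main obstacle is (b): well-definedness is immediate from (a), but $J_1$-continuity cannot be deduced from the uniform Lipschitz bound in (c) because $y\mapsto\int_0^\cdot z(s)\,ds$ does not commute with time changes, so $\Psi$ is not $J_1$-natural. If I wanted a self-contained proof I would take $y_n\to y$ in $J_1$ with time changes $\lambda_n\to\mathrm{id}$ and $y_n\circ\lambda_n\to y$ uniformly on compacts, first derive uniform-in-$n$ bounds $\sup_n(\|x_n\|_T+\|z_n\|_T)<\infty$ by a Gr\"onwall argument (using that $J_1$-convergent sequences are uniformly bounded on $[0,T]$), then run a Gr\"onwall estimate on $|x_n\circ\lambda_n-x|+|z_n\circ\lambda_n-z|$ after the substitution $s=\lambda_n(r)$ in the integrals, controlling the resulting time-change errors via $\lambda_n\to\mathrm{id}$, $\lambda_n^{-1}\to\mathrm{id}$, the uniform bounds, and dominated convergence (integration against $ds$ sends $J_1$-convergent bounded sequences to uniformly convergent ones); this yields $\Psi(y_n)\circ\lambda_n\to\Psi(y)$ uniformly, hence $\Psi(y_n)\to\Psi(y)$ in $J_1$. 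In the paper, however, I would simply invoke the continuity of the corresponding general map in \cite{DaiHeTezcan10} and verify that $\Psi$ is its specialization.
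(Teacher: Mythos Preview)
Your proposal is correct. The paper itself does not prove this lemma at all: it is stated with the preamble ``which is established in a more general setting in \cite{DaiHeTezcan10}'' and no proof is given for any of parts (a)--(d). Your self-contained contraction/Gr\"onwall arguments for (a), (c), (d) are standard and sound (in particular, your handling of the instantaneous $-p(x(t))^-$ term in (c) by first bounding $|x^1(t)-x^2(t)|$ from \eqref{eq:mapPhi1} and then substituting into the $z$-difference is the right move), and your plan to defer the $J_1$-continuity in (b) to \cite{DaiHeTezcan10} is exactly what the paper does. So you are supplying strictly more detail than the paper, not less; nothing is missing.
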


We now define the fluid counterpart of the diffusion-scaled state
processes $(\tilde X^n, \tilde Z^n)$.
Fix an initial state $(\tilde x(0), \tilde z(0))\in \tS$. For $t \ge 0$, we set
\begin{eqnarray} \label{eq:fluiduvn}
\tilde u(t)=\tilde x(0)-\mu \beta  t \quad \mbox{and} \quad
\tilde v(t)=(I-pe') \tilde z(0),
\end{eqnarray}
and, after noting that $(\tilde u(0),\tilde v(0))\in \D(\tT)$, set
\begin{eqnarray} \label{eq:xzfluid}
(\tilde x, \tilde z ) = \Psi ( \tilde u, \tilde v).
\end{eqnarray}
We call $(\tilde x, \tilde z)$ the fluid model starting from $(\tilde x(0),\tilde z(0))$.
The next lemma is a negative drift condition for the fluid model, and states that the
function $\sqrt{g}$ is a geometric Lyapunov function for the fluid model.
Appendix~\ref{sec:negdriftfluid} is devoted to its proof.

\begin{lemma} \label{lem:fluidexpdif}
\gaonote{Fix some $t_0>0$. There exists constants $C=C(t_0)>0$ and $\epsilon=\epsilon(t_0)\in(0,1)$
such that for each initial state $(\tilde x(0), \tilde z(0))\in \tS$, we have
\begin{equation*}
\sqrt {g(\tilde x(t_0),\tilde z({t_0}))}  - \sqrt {g(\tilde x(0), \tilde z(0))}
 \le C-\epsilon\sqrt {g(\tilde x(0), \tilde z(0))}.
\end{equation*} }
\end{lemma}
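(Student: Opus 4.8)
The plan is to analyze the fluid model $(\tilde x, \tilde z) = \Psi(\tilde u, \tilde v)$ directly through its defining integral equations \eqref{eq:mapPhi1}--\eqref{eq:mapPhi2} and to estimate the evolution of $g(\tilde x(t), \tilde z(t))$ along fluid trajectories. Since $g$ is a smooth (piecewise-quadratic) function and the fluid paths are Lipschitz in $t$ (the inputs $\tilde u, \tilde v$ are Lipschitz and $\Psi$ is Lipschitz by part (c) of Lemma~\ref{lem:Phi}), the map $t \mapsto g(\tilde x(t), \tilde z(t))$ is absolutely continuous, and I would compute $\frac{d}{dt} g(\tilde x(t), \tilde z(t))$ using the chain rule together with the expressions for $\dot{\tilde x}$ and $\dot{\tilde z}$ obtained by differentiating \eqref{eq:mapPhi1}--\eqref{eq:mapPhi2}. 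The core claim I would isolate first is a pointwise drift inequality of the form $\frac{d}{dt} g(\tilde x(t),\tilde z(t)) \le -c\, g(\tilde x(t), \tilde z(t)) + C'$ for suitable constants $c, C' > 0$ (independent of the initial state), which is exactly where the common-quadratic-Lyapunov-function structure of Lemma~\ref{lem:cqlf} enters: the term $\kappa (z + \beta\gamma)'Q(z+\beta\gamma)$ contributes a drift governed by $-(QR + R'Q)$ on the ``balanced'' part of $z$ and by $-(Q(I-pe')R + R'(I-ep')Q)$ through the reflection term $p(x)^-$, both of which have the right sign; meanwhile the $\alpha$-abandonment term handles the drift of the $x$-coordinate. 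This is essentially the computation already done in \cite{diekergao:OU} for the diffusion generator, adapted here to the (deterministic) fluid drift, with the modified centering in \eqref{eq:gbeta} chosen precisely so that the two regimes $\beta \ge 0$ and $\beta < 0$ align with the two drift regions $x > 0$ and $x \le 0$.

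Given the pointwise drift inequality $\dot{g} \le -c g + C'$, Gr\"onwall's inequality yields $g(\tilde x(t_0), \tilde z(t_0)) \le e^{-c t_0} g(\tilde x(0), \tilde z(0)) + \frac{C'}{c}(1 - e^{-c t_0})$. Taking square roots and using the elementary inequality $\sqrt{a + b} \le \sqrt{a} + \sqrt{b}$ gives
\[
\sqrt{g(\tilde x(t_0), \tilde z(t_0))} \le e^{-ct_0/2}\sqrt{g(\tilde x(0), \tilde z(0))} + \sqrt{\tfrac{C'}{c}},
\]
so that $\sqrt{g(\tilde x(t_0),\tilde z(t_0))} - \sqrt{g(\tilde x(0),\tilde z(0))} \le -(1 - e^{-ct_0/2})\sqrt{g(\tilde x(0),\tilde z(0))} + \sqrt{C'/c}$, which is the asserted conclusion with $\epsilon = 1 - e^{-ct_0/2} \in (0,1)$ and $C = \sqrt{C'/c}$, both depending only on $t_0$. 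So modulo the pointwise drift estimate, the passage to the stated form is routine.

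The main obstacle, and where the delicate work lies, is establishing the pointwise inequality $\dot g \le -cg + C'$ uniformly over all fluid states, because of the nonsmoothness introduced by the $(x)^+$ and $(x)^-$ terms in \eqref{eq:mapPhi1}--\eqref{eq:mapPhi2} and the fact that the quadratic $g$ itself switches form at $\beta = 0$ (the two branches must be reconciled along $x = 0$ using $e'z + x^- = 0$, as in the rewriting of $g$ displayed in the excerpt). When the ``waiting fluid'' $x$ is large and positive, the abandonment term $-\alpha(x)^+$ in $\dot{\tilde x}$ gives the decrease of the $(x+\beta)^2$ (or $(\alpha x + \mu\beta)^2$) part; this part is, as the authors note, expected. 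The genuinely subtle case is when $x$ is bounded but $|z|$ is large — a huge imbalance across service phases — where the decrease must come entirely from the $\kappa(z+\beta\gamma)'Q(z+\beta\gamma)$ term via $-(QR + R'Q)$ being positive definite (Lemma~\ref{lem:cqlf}), and one must carefully check that the cross terms between the $x$-part and the $z$-part, as well as the contribution of the reflection term $p(x)^-$ (controlled by the positive-semidefiniteness of $Q(I-pe')R + R'(I-ep')Q$), do not overwhelm this negative drift; choosing $\kappa$ large is what makes the $z$-drift dominate the cross terms. I would organize this into two or three cases (large $x^+$; large $|z|$ with bounded $x$; and the bounded region where the constant $C'$ absorbs everything), track the constants to confirm they depend only on $t_0$ (in fact not even on $t_0$ for the pointwise step — $t_0$ only enters through Gr\"onwall), and defer the fully detailed case analysis to Appendix~\ref{sec:negdriftfluid} as the excerpt indicates.
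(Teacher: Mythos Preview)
Your approach is correct and closely parallels the paper's. Both hinge on the same pointwise drift analysis of $g$ along fluid trajectories, case-split according to the sign of $\tilde x(t)$ and using Lemma~\ref{lem:cqlf} with $\kappa$ chosen large; the paper packages this as Lemma~\ref{lem:gbetaflu}, which gives $\dot g \le 0$ everywhere and $-Cg \le \dot g \le -cg$ outside a compact set. Your combined inequality $\dot g \le -cg + C'$ follows immediately from those two conclusions (take $C' = c \cdot \sup g$ over the compact set), so the drift step is substantively identical.

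The final passage differs slightly: you apply Gr\"onwall to $\dot g \le -cg + C'$ and take square roots, whereas the paper uses a two-case argument (if $\sqrt{g(0)}$ is small, invoke $\dot g \le 0$; if $\sqrt{g(0)}$ is large, use the \emph{lower} bound $\dot g \ge -Cg$ to show that $\sqrt{g}$ stays above the threshold $M$ on $[0,t_0]$, so that the upper exponential decay $\dot g \le -cg$ applies throughout). Your route is marginally cleaner and, notably, does not require the lower bound $\dot g \ge -Cg$ at all. One minor imprecision in your write-up: the two branches of $g$ in \eqref{eq:gbeta} are indexed by the sign of $\beta$ (a fixed model parameter), not by the sign of $x$; it is the fluid \emph{dynamics} that switch at $x=0$, so the case analysis is the four-way split $\{\beta \ge 0,\ \beta < 0\} \times \{\tilde x(t) \ge 0,\ \tilde x(t) < 0\}$, exactly as in the paper's proof of Lemma~\ref{lem:gbetaflu}.
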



\section{Our Lyapunov function and diffusion-scaled processes}
\label{sec:Lyapunovdiff}

In this section, we present a negative drift condition for the diffusion-scaled processes,
and we briefly outline how this leads to our main result.
We use the same Lyapunov function $\sqrt{g}$ as in the fluid model.

We are now ready to formulate our negative drift condition for the diffusion-scaled processes.
Here and in the rest of this paper,
we adopt the notational convention that
\[\E_{(a, x,z)}[  \quad \cdot \quad  ] = \E [  \quad \cdot \quad
|\tilde A^n(0)=a, \tilde X^n(0)= x, \tilde Z^n (0) =z  ] \]
for each initial state $(a,x,z)\in\tS^n$.

\begin{lemma}\label{lem:abddrift}
Fix any  $t_0>0$.
Under the assumptions of Theorem \ref{thm:tightness},
there exists a nonnegative function $g$ on $\tS$, as well as two constants
$C=C(t_0)>0$ and  $\epsilon=\epsilon(t_0) \in (0,1)$,  such that
for each $n$ and each feasible initial state $(a,x,z)\in\tS^n$, we have
\begin{align} \label{ineq:driftineq}
\E_{(a,x,z)} \left[\sqrt{g (\tilde X^n(t_0), \tilde Z^n(t_0))} \right] - \sqrt{g(x,z)}
\le C - \epsilon \sqrt{g(x,z)}.
\end{align}
\end{lemma}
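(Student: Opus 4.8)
The plan is to transfer the fluid-model drift inequality of Lemma~\ref{lem:fluidexpdif} to the diffusion-scaled processes by a coupling/comparison argument, controlling the error via the functional central limit theorem for the arrival process and the Lipschitz property of the map $\Psi$ (Lemma~\ref{lem:Phi}(c)). First I would obtain a representation of $(\tilde X^n,\tilde Z^n)$ of the same structural form as the fluid dynamics \eqref{eq:mapPhi1}--\eqref{eq:mapPhi2}, namely $(\tilde X^n,\tilde Z^n)=\Psi(\tilde U^n,\tilde V^n)+(\text{error processes})$, where $(\tilde U^n,\tilde V^n)$ collects the diffusion-scaled free processes (the centered arrival process $\tilde E^n$, the service and abandonment martingales, and the age term $\tilde A^n$) and the error comes from the fact that the prelimit dynamics only approximately solve the fixed-point equations; such a decomposition is precisely what underlies the proof of Proposition~\ref{thm:critical}(a) in \cite{DaiHeTezcan10}. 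Using Lemma~\ref{lem:Phi}(c) and the positive homogeneity in Lemma~\ref{lem:Phi}(d), I would then compare $(\tilde X^n,\tilde Z^n)$ on $[0,t_0]$ with the fluid trajectory $(\tilde x,\tilde z)$ started from the same (random) initial point $(x,z)$ but driven by the deterministic input \eqref{eq:fluiduvn}; the difference is controlled by $\|\tilde U^n-\tilde u\|_{t_0}+\|\tilde V^n-\tilde v\|_{t_0}$ plus the error processes, and this quantity is $O_P(1)$, uniformly in $n$ and crucially \emph{uniformly in the initial state} because the free increments do not depend on the starting point.

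Next I would exploit the fact that $\sqrt{g}$ is Lipschitz away from its minimum and is comparable to the norm: there are constants $c_1,c_2>0$ with $c_1(|x|+|z|)-c_2\le\sqrt{g(x,z)}\le c_2(|x|+|z|)+c_2$, and $|\sqrt{g(x_1,z_1)}-\sqrt{g(x_2,z_2)}|\le c_3(|x_1-x_2|+|z_1-z_2|)$. Combining this with the comparison estimate gives
\[
\E_{(a,x,z)}\!\left[\sqrt{g(\tilde X^n(t_0),\tilde Z^n(t_0))}\right]
\le \sqrt{g(\tilde x(t_0),\tilde z(t_0))} + c_3\,\E_{(a,x,z)}\!\left[\,\|\tilde X^n-\tilde x\|_{t_0}+\|\tilde Z^n-\tilde z\|_{t_0}\,\right],
\]
and the expectation on the right is bounded by a constant $C_1=C_1(t_0)$ independent of $n$ and of $(a,x,z)$. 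Then I would apply Lemma~\ref{lem:fluidexpdif} to the first term: $\sqrt{g(\tilde x(t_0),\tilde z(t_0))}\le \sqrt{g(x,z)}+C(t_0)-\epsilon(t_0)\sqrt{g(x,z)}$. Adding $C_1$ to $C(t_0)$, and (if necessary) shrinking $\epsilon(t_0)$ slightly and enlarging the additive constant to absorb any lower-order slack from the Lipschitz comparison near the minimum of $g$, yields \eqref{ineq:driftineq} with the desired $C=C(t_0)$ and $\epsilon=\epsilon(t_0)\in(0,1)$.

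The main obstacle is establishing the comparison bound $\E_{(a,x,z)}[\|\tilde X^n-\tilde x\|_{t_0}+\|\tilde Z^n-\tilde z\|_{t_0}]\le C_1$ with a constant that is genuinely uniform in both $n$ and the initial state. This requires (i) writing the prelimit process via the same map $\Psi$ up to explicitly identified error terms — one has to be careful because the prelimit state is integer-valued and lives on $\tS^n$, and the non-idling/FIFO structure must be matched to the free-boundary structure encoded in $\Psi$; (ii) showing the driving-process discrepancies have uniformly bounded first moments: the arrival contribution is handled by Assumption~\ref{ass:expinterarr} and a Doob/Wald-type bound on the renewal-counting martingale, the service and abandonment contributions by martingale maximal inequalities with quadratic variation bounded using $e'\tilde Z^n=\text{const}$ and $\tilde X^n$'s drift, and the age term by the finite second moment of $u(i)$; and (iii) ensuring that the error processes arising from the non-exact fixed-point relation are $o(1)$ or at least $O(1)$ in expectation uniformly — this is exactly where the $n$-dependent Lipschitz constant of $\Psi$ in Lemma~\ref{lem:Phi}(c) is harmless, since we only need the map on the fixed finite interval $[0,t_0]$. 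These moment bounds, uniform in time over $[0,t_0]$ and in $n$, are the technical heart of the argument and will occupy most of Appendix~\ref{sec:negdriftdiff}.
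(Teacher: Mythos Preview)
Your overall architecture matches the paper's: represent $(\tilde X^n,\tilde Z^n)$ through the map $\Psi$, compare with the fluid model via the Lipschitz properties of $\sqrt{g}$ and $\Psi$, and then invoke Lemma~\ref{lem:fluidexpdif}. However, there is a genuine gap at the step you flag as ``crucially uniform in the initial state.''

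The driving processes $\tilde U^n,\tilde V^n$ are \emph{not} free of the initial state. In the representation used in \cite{DaiHeTezcan10} (and reproduced in the paper's Appendix), $\tilde U^n$ contains the abandonment term $\tilde G^n\bigl(\int_0^t(\bar X^n(s))^+\,ds\bigr)$ and $\tilde V^n$ contains $\tilde\Phi^{0,n}(\bar B^n(t))$. These are martingales evaluated at \emph{state-dependent} random time changes: $\int_0^{t_0}(\bar X^n(s))^+\,ds$ and $\bar B^n(t_0)$ are both of order $|x|/\sqrt{n}$ when the initial (scaled) queue $x$ is large. Doob's inequality then gives second moments of order $|x|/\sqrt{n}$, hence first moments of order $\sqrt{|x|}/n^{1/4}$. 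The upshot is that one can only prove
\[
\E_{(a,x,z)}\bigl[\|\tilde U^n-\tilde u\|_{t_0}+\|\tilde V^n-\tilde v\|_{t_0}\bigr]\le C+C\,\sqrt[4]{g(x,z)},
\]
not a bound uniform in $(x,z)$. This is exactly the content of the paper's Lemma~\ref{lem:unifmombd}. Your claimed inequality $\E_{(a,x,z)}[\|\tilde X^n-\tilde x\|_{t_0}+\|\tilde Z^n-\tilde z\|_{t_0}]\le C_1$ therefore fails as stated.

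The fix is not cosmetic: after the Lipschitz comparison you obtain an extra $+C\sqrt[4]{g(x,z)}$ on the right side of \eqref{ineq:driftineq}, and you must then argue that this lower-order term can be absorbed into the fluid drift $-\epsilon\sqrt{g(x,z)}$ by splitting $\epsilon$ in half and using that $\sqrt[4]{g}-\tfrac{\epsilon}{2}\sqrt{g}$ is bounded above. The paper carries this out explicitly. Two smaller points: the representation $(\tilde X^n,\tilde Z^n)=\Psi(\tilde U^n,\tilde V^n)$ is exact, with no residual ``error processes'' to control separately; and the age term $\tilde A^n$ does not enter $\tilde U^n$ at all.
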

The function $\sqrt{g}$ satisfying (\ref{ineq:driftineq}) is
essentially a geometric Lyapunov function with a geometric drift size
$1-\epsilon$ and drift time $t_0$, see for instance Section~3 in
\cite{GamarnikZeevi06}.  Readers are referred to
\cite{GamarnikZeevi06} and \cite{MeynTweedie09} for more details on
the definition of a Lyapunov function and its application in deriving bounds
for stationary distributions of Markov processes.


The proof of the negative drift condition in Lemma~\ref{lem:abddrift} is lengthy and will be given
in Appendix~\ref{sec:negdriftdiff}. Assuming the lemma, the
proof of Theorem~\ref{thm:tightness} is standard. We end this section
by giving a sketch of the proof, which is almost identical to the proof of
Theorem~5 in \cite{GamarnikZeevi06}.

\begin{proof} [Proof sketch of Theorem~\ref{thm:tightness}]
In order to show that $\tilde \pi^n$ is tight,
since $\tilde A^n\Rightarrow 0$ as $n\to\infty$ and $g$ has
compact level sets (see Appendix~\ref{sec:negdriftfluid}),
it is sufficient to show that for any given $\delta>0$, there exists some large
constant $s$, such that for all $n$ sufficiently large,
\[
{P_{\tilde \pi^n} ( \sqrt{g( \tilde X^n(0), \tilde Z^n(0))}>s)}\le  \delta.
\]
By Markov's inequality, it suffices to show that, for some $C<\infty$,
\begin{equation}
  \label{eq:ExUniformBound}
\E_{\tilde \pi^n} \left[ \sqrt{g( \tilde X^n(0), \tilde
    Z^n(0))}\right] \le C/\epsilon,
\end{equation}
where $C$ and $\epsilon$ are constants in  (\ref{ineq:driftineq}). To prove (\ref{eq:ExUniformBound}),
 we use (\ref{ineq:driftineq}) in the following form:
\[
\epsilon \sqrt{g(x,z)}-C \le
\sqrt{g(x,z)} - \E_{(a,x,z)} [\sqrt{g (\tilde X^n(t_0), \tilde Z^n(t_0))} ].
\]
We argue that the right-hand side is nonpositive after taking the expectation
with respect to $\tilde \pi^n$, and (\ref{eq:ExUniformBound}) then follows.
For each integer  $k\ge 1$ and each $(x,z)\in \R\times \R^K$,
set $f_k(x, z) = \sqrt{g(x,z)} \wedge k$.
It can be checked that $f_k(x,z) - \E_{(a,x,z)} f_k ( \tilde X^n(t_0), \tilde
Z^n(t_0) )$ is bounded below by $-C$ for all $(a,x,z)\in \tS^n$.
Therefore, Fatou's Lemma can be applied and we deduce that
\begin{eqnarray*}
\lefteqn{\int_{\tS} \left[\sqrt{g(x,z)}- \E_{(a,x,z)}
 \sqrt{g( \tilde X^n(t_0), \tilde Z^n(t_0))} \right] d \tilde \pi^n(a,x,z)}\\
&\le& \mathop {\lim \inf}_{k \rightarrow \infty} \int_{\tS}
\left[f_k(x,z) - \E_{(a,x,z)} f_k ( \tilde X^n(t_0), \tilde Z^n(t_0) )
\right] d \tilde \pi^n(a,x,z) = 0,
\end{eqnarray*}
where the equality follows from stationarity of $\tilde \pi^n$.
\end{proof}

\section*{Acknowledgments}
JGD is supported in part by NSF grants CMMI-0727400, CMMI-0825840, CMMI-1030589, and CNS-1248117.
ABD is supported in part by NSF grant CMMI-1252878, and
he also gratefully acknowledges the hospitality of the Korteweg-de Vries Institute.

\appendix

\section{Positive Harris recurrence of $GI/Ph/n+M$ queues}
\label{sec:posit-harr-recurr}

In this appendix, we provide a sufficient condition on the
interarrival time distribution under which Assumption
\ref{ass:discretestab} holds.  To study the positive Harris
recurrence of a $GI/Ph/n + M$ queue, we fix the number of server
$n$, as well as the interarrival distribution and phase-type service
time distribution. Because $n$ is fixed, unlike in Section
\ref{sec:model-descr-prob}, here we drop the superscript $n$ in all
relevant quantities. Let $\{\xi(i):i=0, 1, 2, \ldots\}$ be the
sequence of interarrival times with $\{\xi(i):i=1, 2, \}$ being an
i.i.d.\ sequence and $\xi(0)$ being the arrival time of the first
customer after time $0$. We use $F$ to denote the cumulative
distribution of $\xi(1)$. We make the following assumptions on $F$:

\begin{enumerate}[(a)]
\item The distribution $F$ is unbounded, i.e., $F(x)<1$ for all
  $x>0$. \label{eq:unboundedsupp}



\item The distribution $F$ has a density, and furthermore the hazard function
\[
     h(x)= \frac{F'(x)}{1-F(x)} \quad \text{ for } x \ge 0
\]
of the distribution $F$ is locally bounded. \gaonote{Here, $h(0) =F'(0)$ where $F'(0)$ is interpreted as the right-derivative of $F$ at zero.}
\end{enumerate}

Recall that $\alpha$ is the rate of the exponential distribution for
patience times. For the definition of positive Harris recurrence, see, for
example, \cite{dai95a}. In the following proposition, $Q(t)$ is the
number of waiting customers in queue at time $t$. The other two
quantities $A(t)$ and $Z(t)$ retain the meaning in Section
\ref{sec:model-descr-prob}.

\begin{proposition} \label{pro:positiveCurrence}
  Suppose Assumptions (a)--(b) hold and $\alpha>0$. The Markov process
$\{(A(t), Q(t), Z(t)): t\ge 0\}$ is positive Harris recurrent.
\end{proposition}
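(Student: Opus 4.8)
The plan is to establish positive Harris recurrence by exhibiting a closed petite set and a Foster--Lyapunov drift condition, following the standard fluid-model / Lyapunov approach for piecewise-deterministic Markov processes. First I would fix $n$ and identify the state space as $\mathcal{S}=\R_+\times\Z_+\times\Z_+^K$, noting that $\{(A(t),Q(t),Z(t))\}$ is a piecewise-deterministic strong Markov process. The key structural observation to exploit is that customer abandonment makes the queue content self-limiting: whenever $Q(t)$ is large, the total abandonment rate $\alpha Q(t)$ dominates the (fixed) arrival rate $\lambda$, so $Q$ has a strong negative drift; and whenever $Q(t)=0$ but $e'Z(t)$ is large, service completions drain the servers at a bounded-below rate since the $\nu_k$ are positive and $I-P$ is invertible. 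This suggests a Lyapunov function of the form $V(a,q,z)=c_1 q + c_2\, \mathbf{1}'(I-P')^{-1}\diag(\nu)^{-1}\!\cdot(\text{something in }z)+\psi(a)$, or more simply the expected remaining workload in the system plus a term handling the age $a$; the workload-type functional decreases at unit rate from service and abandonment while increasing only at arrival epochs by a quantity with finite mean.

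The main steps I would carry out are: (1) verify that the process is a Borel right process (or $T$-process) so that the Meyn--Tweedie machinery applies --- this is where Assumption (b), local boundedness of the hazard function $h$, enters, since it guarantees that the arrival clock has an absolutely continuous residual lifetime and hence that compact sets of states with bounded age are petite, while Assumption (a), unboundedness of $F$, ensures irreducibility (every age value in $\R_+$ is reachable and the chain is not confined to a lattice in $a$); (2) show all compact subsets of $\mathcal S$ are petite by a minorization argument: starting from a bounded initial state, with positive probability uniformly bounded below one can, over a fixed time horizon, have no arrivals (using $F(x)<1$) and flush the existing customers out of service, reaching a small neighborhood of the empty state, then let exactly one arrival occur with its age spread according to a density bounded below (using local boundedness of $h$ away from degeneracies); (3) establish the geometric/fluid drift: run the discrete-time chain sampled at arrival epochs, or use the continuous-time generator $\mathcal{A}V$, and show $\mathcal{A}V(a,q,z)\le -1$ outside a compact set. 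For the generator computation I would use that between jumps $\dot a = 1$, $\dot q = -\alpha q\cdot\mathbf{1}\{e'Z = n\}$ essentially (customers abandon at rate $\alpha q$), and $Z$ evolves as a linear ODE $\dot z = -Rz + (\text{feed-in from queue})$ driven by phase transitions; at an arrival the age resets to $0$ and one customer is added. Choosing $V$ to be a linear combination of $q$, a suitable positive linear functional of $z$ (using that $R^{-1}$ has nonnegative entries because $(I-P')^{-1}=\sum P'^k\ge 0$), and a bounded increasing function of $a$ that captures the mean residual interarrival time, the arrival-driven increase is $O(1)$ in expectation while the between-jump decrease is at least a positive constant once $q+e'z$ is large.

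I would then invoke the standard theorem (e.g. from Meyn--Tweedie, or the version in the reference \cite{dai95a} / \cite{dav84}) that a right process which is $\psi$-irreducible, has all compact sets petite, and satisfies $\mathcal{A}V\le -1 + b\,\mathbf{1}_C$ for some petite $C$ is positive Harris recurrent, concluding the proof. The hard part --- and the step I expect to be the genuine obstacle --- is \emph{not} the drift inequality (abandonment makes that comparatively easy) but rather the petiteness / irreducibility verification for the piecewise-deterministic process with a non-exponential arrival clock: one must carefully use Assumptions (a) and (b) to produce a density-bounded-below regeneration structure, and one must handle the fact that the age coordinate $A(t)$ lives on the non-compact set $\R_+$, so the compact level sets of $V$ must be taken in a topology that controls $a$; this is exactly why $V$ needs a term growing in $a$ (or one restricts attention to the chain embedded at arrival instants, where the age is always $0$ and the state space effectively reduces to $\Z_+\times\Z_+^K$). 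I would most likely take the embedded-chain route: prove that the Markov chain $(Q,Z)$ observed just after successive arrivals is an irreducible, aperiodic, positive recurrent chain on $\Z_+\times\Z_+^K$ via a drift condition on $Q+e'Z$, check that the inter-arrival excursions have bounded mean length and that one-step transition kernels dominate a common minorizing measure on compacts, and then lift positive recurrence of the embedded chain back to positive Harris recurrence of the continuous-time process using the bounded-cycle-length criterion together with the $T$-process property guaranteed by the locally bounded hazard rate.
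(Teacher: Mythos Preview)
Your overall strategy---Foster--Lyapunov drift plus petiteness of compact sets, invoking Meyn--Tweedie---matches the paper's. The execution in the paper, however, is considerably simpler than what you outline, for one structural reason you overlooked: with $n$ fixed, the server-allocation vector $Z(t)$ always satisfies $e'Z(t)\le n$, so it lives in a \emph{finite} set. There is no need for any ``drain-the-servers'' argument or for a term like $\mathbf{1}'(I-P')^{-1}\diag(\nu)^{-1}z$ in the Lyapunov function; the only unbounded coordinates are the age $A$ and the queue length $Q$. The paper exploits this by taking the elementary function
\[
f(a,q,z)=2F(a)\bigl(1+m(a)\bigr)+q+e'z,
\]
where $m(a)=\E[\xi(1)-a\mid\xi(1)>a]$ is the mean residual life. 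The first term is a known Lyapunov function for the age process of a renewal process (borrowed from the reference the paper cites as \texttt{lyarenewal}), and a direct computation of the extended PDMP generator $Gf$ yields $Gf\le -1$ once either $a$ or $q$ is large: for large $a$ the renewal-process term alone gives the drift, while for bounded $a$ and large $q$ the abandonment contribution $-\alpha q$ dominates the bounded remainder. Petiteness of the resulting compact set follows because $F$, having a density, is spread out, together with the unbounded-support assumption~(a).

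Your embedded-chain route would also succeed, but the paper's direct continuous-time computation avoids both the lifting step and any nontrivial handling of the $z$-coordinate. One caution on your generator sketch: a merely \emph{bounded} function of $a$ will not in general yield $Gf\le -1$ for large $a$ unless the hazard $h(a)$ is bounded away from zero at infinity, which is not assumed; the mean-residual-life construction is precisely what makes the age drift go through under only local boundedness of $h$.
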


\begin{proof}
 The process
$\{(A(t), Q(t), Z(t)): t\ge 0\}$ is known as a piecewise
deterministic Markov process as defined in \cite{dav84}; see also
Chapter~11 in \cite{rolski2009stochastic}. The main idea is to
construct a suitable Lyapunov function $f$ that is in the extended
domain of the generator $G$ of the Markov process. To construct the
Lyapunov function,  we first define the mean residual life function
$m(x)$ of the distribution function $F$ by
\[
 m(x)= \frac {1}{ 1 -F(x)} \cdot \int_{x}^{\infty}
(1-F(s))ds  = \E[\xi(1)-x| \xi(1)>x] \quad\text{ for } x \ge 0.
\]
We use
$(a,q,z)=(a, q, z_1, \ldots, z_K) \in {\cal S}=
\mathbb{R}_{+}\times \Z_+\times\mathbb{Z}_{+}^{K} $
to represent a state of the Markov process.  For each state $(a,q,z)$,
we define
\begin{equation} \label{eq:ln}
f(a,q,z)=2F(a)(1+m(a)) + q +\sum_{k=1}^K z_k.
\end{equation}
The first component, $F(a)( 1+m(a))$, is identical to a Lyapunov
function introduced in  Lemma 2.1 of \cite{lyarenewal}. We first
verify that $f$ is in the domain of the extended generator of the
Markov process (see Definition~5.2 in \cite{dav84} for the
definition of extended generator). We use Theorem~11.2.2 in
\cite{rolski2009stochastic} (see also Theorem~5.5 in \cite{dav84})
and check the three conditions there. Since the boundary set of the
piecewise deterministic Markov process is empty, and the sample path
of the age process $A$ is absolutely continuous between jumps, it
suffices to check that for each $t \ge 0$
\begin{equation} \label{eq:L1finite}
\E_{(a,q,z)} \left[ \sum_{i: T_i \le t} | f(A(T_i), Q(T_i), Z(T_i))
- f(A(T_i-), Q(T_i-), Z(T_i-))|  \right] < \infty,
\end{equation}
where $T_i$ are the jump epochs of the Markov process $\{(A(t),
Q(t), Z(t)): t\ge 0\}$. It follows from the proof of Lemma~2.1 and
Equation (2.5) in \cite{lyarenewal} that
\begin{equation*}
\E_{(a,q,z)} \left[ \sum_{i: T_i \le t} \left| F(A(T_i))(1+m(A(T_i)) -
F(A(T_i-)) (1+m(A(T_i-))\right| \right] < \infty.
\end{equation*}
Thus in order to establish (\ref{eq:L1finite}), we know from
(\ref{eq:ln}) that it is enough to show
\begin{equation} \label{eq:nbevent}
\E_{(a,q,z)} \left[ \sum_{i: T_i \le t} \left|  Q(T_i) + \sum_{k=1}^{K}
Z_k(T_i) - Q(T_i-) - \sum_{k=1}^{K} Z_k(T_i-) \right| \right] < \infty.
\end{equation}
To show (\ref{eq:nbevent}), we note that the number of
customer departures within $[0, t]$, due to either service completion
or abandonment, is bounded by $ q +n + E(t).$ Therefore,
\begin{displaymath}
\sum_{i: T_i \le t} \left|  Q(T_i) + \sum_{k=1}^{K}
Z_k(T_i) - Q(T_i-) - \sum_{k=1}^{K} Z_k(T_i-) \right| \le n+q + 2 E(t),
\end{displaymath}
from which (\ref{eq:nbevent}) follows because
\[
\E_{(a,q,z)} (E(t)) \le \lambda t + C (\sqrt{t} +1) \quad \text{for
some constant $C>0.$}
\]
See, e.g., Lemma~3.5 in \cite{budhiraja2006diffusion}.

Now we can write down the extended generator for the piecewise
deterministic Markov process $\{(A(t), Q(t), Z(t)): t\ge 0\}$. For
$\sum_k z_k=n$, $q>0$ and $a\ge 0$, it follows from (5.7) of
\cite{dav84} that
\begin{eqnarray} \label{eq:generator}
 Gf(a,q, z) &=&  \frac{\partial f}{\partial a}(a, q, z) + h(a) f(0, q+1, z) + \left(\alpha q +
 \sum_{k}\nu_k\right) f(a, q-1, z)  \nonumber \\ && {}- (\alpha q +
 \sum_{k}\nu_k + h(a)) f(a, q, z) \nonumber \\
&=& 2 \left[ [F(a) (1+m(a))]'-h(a)[F(a)(1+m(a))] \right] +h(a) -
\left(\alpha q+ \sum_{k}\nu_k\right),
\nonumber \\
& \le & 2(1+h(a)) \left[1- 2 F(a)+ \int_{a}^{ \infty }
  {(1-F(s))ds}\right] + h(a) -
 \left(\alpha q+ \sum_{k}\nu_k\right), \nonumber \\
 \end{eqnarray}
where in the second equality we have used the fact that $F(0)=0$, and
the  inequality follows from the derivation on page 170 of
\cite{lyarenewal}.  Because
\begin{equation*}
\lim_{a \rightarrow \infty} \left[1- 2 F(a)+ \int_{a}^{ \infty }
{(1-F(s))ds} \right] = -1,
\end{equation*}
there exists some $C_1>0$
such that
\begin{equation}
  \label{eq:lessneghalf}
  1- 2 F(a)+ \int_{a}^{ \infty }
{(1-F(s))ds} \le -\frac{1}{2} \quad \text{ for } a> C_1.
\end{equation}
Combining (\ref{eq:generator}) and (\ref{eq:lessneghalf}), we have
for $a>C_1$ and $q>0$
\begin{equation} \label{eq:alargeqlarge}
 Gf(a, q, z ) \le - 1 -\left(\alpha q + \sum_{k}\nu_k\right).
\end{equation}
 For any $a\ge 0$,  $q=0$ and $z\ge 0$, one can check that
 (\ref{eq:alargeqlarge})
 continues to holds with $\sum_{k}\nu_k$ in (\ref{eq:alargeqlarge})
 being replaced by $
 \sum_{k: z_k>0}\nu_k$.
Therefore, we have
for $a>C_1$, any $q\in \Z_+$, and any $z\in \Z_+^K$,
\begin{equation}
  \label{eq:Lyapunovlessminus1}
  Gf(a, q, z)  \le -1.
\end{equation}
Let
\begin{displaymath}
  H = \sup_{0 \le a \le C_1} \left\{
2(1+h(a)) [1- 2 F(a)+ \int_{a}^{ \infty }
  {(1-F(s))ds}] + h(a)  \right\},
\end{displaymath}
which is finite by Assumption (b) on $F$.
It follows from (\ref{eq:generator}) that for  $a\in [0, C_1]$ and
$q\ge C_2=(H+1)/\alpha$, (\ref{eq:Lyapunovlessminus1}) also holds.
It follows that
\begin{equation}
  \label{eq:finalLyapunov}
  Gf(a, q, z)\le -1 + H 1_{B}(a,q, z),
\end{equation}
where $B$ is the compact set given by
\begin{displaymath}
  B=\left\{ a\in [0, C_1], \, q\in [0,
C_2]\cap \Z_+, \, z\in \Z_+^K,\, 0\le \sum_{k}z_k \le n\right\}.
\end{displaymath}
  Since $F$ is assumed to have density, it is spreadout. Together with
Assumption~(\ref{eq:unboundedsupp}) on $F$, this implies that
the set $B$ is a closed petite set in the state space
${\cal S}=\mathbb{R}_{+}\times\Z_+\times \mathbb{Z}_{+}^{K} $;  see, e.g.,
the proof of  Lemma~3.7 of \cite{meydow94}. It follows from
(\ref{eq:finalLyapunov}) and Theorem~4.2 of \cite{meytwe93b} that
the Markov process $\{(A(t), Q(t), Z(t)): t\ge 0\}$ is positive
Harris recurrent.
\end{proof}

\section{Negative drift condition for the fluid model}
\label{sec:negdriftfluid}

It is the goal of this appendix to establish the negative drift condition for the fluid model,
i.e., to prove Lemma~\ref{lem:fluidexpdif}.
Throughout, we let $( \tilde x, \tilde z)$ be defined through (\ref{eq:xzfluid}), i.e.,
as output of the map $\Psi$ with input given in (\ref{eq:fluiduvn}).
The initial condition $(\tilde x(0),\tilde z(0))\in \tS$ is arbitrary.

We start with several auxiliary lemmas, which establish key properties of our Lyapunov function and
the fluid model. Their proofs are deferred to the end of this appendix.
The next lemma says that $\sqrt{g}$ is Lipschitz continuous.
\begin{lemma}
\label{lem:Lipschitz}
There exists a constant $C$ such that
\[
\left|\sqrt{g(x^1,z^1)}-\sqrt{g(x^2,z^2)}\right| \le
C |(x^1,z^1)-(x^2,z^2)|
\]
for any $(x^1,z^1),(x^2,z^2)\in \tS$.
\end{lemma}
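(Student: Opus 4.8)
The plan is to reduce the claim to the elementary fact that the square root is Lipschitz away from zero together with the boundedness of the gradient of $g$, carefully handling the two regimes $\beta\ge 0$ and $\beta<0$ separately because $g$ has a different minimum value in each case. First I would record that $g$ is a quadratic (hence smooth) function of $(x,z)$ on all of $\R\times\R^K$, so on any bounded set it is globally Lipschitz; the real issue is obtaining a Lipschitz constant that is \emph{uniform over all of $\tS$}, which is unbounded. The standard device is: $|\sqrt{a}-\sqrt{b}| = |a-b|/(\sqrt a+\sqrt b)$, so if $g$ is bounded below by a strictly positive constant on $\tS$ and $|\nabla g(x,z)|$ grows at most linearly like $\sqrt{g(x,z)}$, then the product $|\nabla g|/\sqrt g$ stays bounded, and one concludes by the mean value theorem applied along the segment joining $(x^1,z^1)$ and $(x^2,z^2)$ (which stays in the affine space containing $\tS$, or can be taken in $\R\times\R^K$ since $g$ is defined there).

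The case $\beta<0$ is the clean one: by the rewriting of $g$ displayed after \eqref{eq:gbeta}, $g(x,z)\ge \kappa\beta^2\gamma'Q\gamma>0$ on $\tS$ (using $\beta\ne 0$; the degenerate case $\beta=0$ falls under the first regime), so $\sqrt g$ is bounded below, and since $g$ is quadratic with $|\nabla g(x,z)| = O(|(x,z)|+1) = O(\sqrt{g(x,z)}+1)$, the ratio $|\nabla g|/\sqrt g$ is bounded; Lipschitz continuity of $\sqrt g$ follows. The case $\beta\ge 0$ is the delicate one, since there $g$ attains the value $0$ at $(-\beta,-\beta\gamma)\in\tS$, so $\sqrt g$ is \emph{not} bounded below and the $\sqrt a + \sqrt b$ denominator can vanish. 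Here I would instead argue directly: write $g(x,z)=|L(x,z)+c|^2$ is a sum of squares (with $L$ linear, using positive definiteness of $Q$ to factor $(z+\beta\gamma)'Q(z+\beta\gamma)=|Q^{1/2}(z+\beta\gamma)|^2$), so $\sqrt{g(x,z)} = |M(x,z)-M(x^*,z^*)|$ for the linear map $M(x,z)=\big((x+\beta),\,\sqrt\kappa\,Q^{1/2}(z+\beta\gamma)\big)$ and its value $M(x^*,z^*)$ at the minimizer. Then $\sqrt g$ is literally a composition of a linear map with the Euclidean norm, hence Lipschitz with constant $\|M\|$, and the triangle inequality $\big||Mw_1 - v^*| - |Mw_2-v^*|\big|\le |M(w_1-w_2)|\le \|M\|\,|w_1-w_2|$ finishes it. For $\beta<0$ the same trick works verbatim with $M(x,z)=\big(\alpha x+\mu\beta,\ \sqrt\kappa\,Q^{1/2}(z+\beta\gamma)\big)$, so in fact both cases are handled uniformly by this ``$\sqrt g$ is a shifted Euclidean norm of a linear image'' observation, and one takes $C$ to be the larger of the two operator norms.

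The only mild obstacle is bookkeeping: one must check that the cross term $\kappa(z+\beta\gamma)'Q(z+\beta\gamma)$ really is $|Q^{1/2}(z+\beta\gamma)|^2$ with $Q^{1/2}$ the positive-definite square root from Lemma~\ref{lem:cqlf}, and that stacking a scalar and a vector into one Euclidean-norm expression is legitimate (it is, since $g$ is a sum of a square and such a norm-squared). No appeal to the structure of $\tS$ is even needed beyond noting the bound should hold for $(x^i,z^i)$ in $\tS$; the argument in fact gives the inequality for all $(x^i,z^i)\in\R\times\R^K$, which is stronger. I expect the write-up to be short.
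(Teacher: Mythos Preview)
Your proposal is correct, and the argument you eventually settle on---writing $\sqrt{g(x,z)}$ as the Euclidean norm of an affine map $M(x,z)$ and invoking the reverse triangle inequality---is exactly the paper's approach: the paper defines a norm $\|(x,z)\|_Q^2 = x^2+\kappa z'Qz$, observes $\sqrt{g(x,z)}=\|(x+\beta,z+\beta\gamma)\|_Q$ (for $\beta\ge 0$), and applies subadditivity plus norm equivalence, which is your $\|M\|$ bound in different clothing. Your initial detour through the mean value theorem and the ratio $|\nabla g|/\sqrt g$ is unnecessary, as you yourself note; drop it and go straight to the norm argument for both cases.
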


The next lemma implies that $(\tilde x(t),\tilde z(t))$
has derivatives with respect to $t$ almost everywhere.
\begin{lemma}
\label{lem:Lipfluid}
The function $t\mapsto (\tilde x(t),\tilde z(t))$ is locally Lipschitz continuous.
\end{lemma}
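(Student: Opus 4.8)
The statement to prove is Lemma~\ref{lem:Lipfluid}: the map $t\mapsto(\tilde x(t),\tilde z(t))$ is locally Lipschitz continuous, where $(\tilde x,\tilde z)=\Psi(\tilde u,\tilde v)$ with $\tilde u,\tilde v$ given by \eqref{eq:fluiduvn}.

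\textbf{Plan of proof.} The plan is to exploit the Lipschitz continuity of the map $\Psi$ established in part (c) of Lemma~\ref{lem:Phi}, combined with the explicit, manifestly Lipschitz form of the input $(\tilde u,\tilde v)$. First I would observe that the input data is itself Lipschitz in time: from \eqref{eq:fluiduvn}, $\tilde u(t)=\tilde x(0)-\mu\beta t$ is affine in $t$ and $\tilde v(t)\equiv(I-pe')\tilde z(0)$ is constant, so for any $T>0$ and any $0\le s\le t\le T$ we have a bound of the form $|\tilde u(t)-\tilde u(s)|+|\tilde v(t)-\tilde v(s)| = |\mu\beta|\,(t-s)$. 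The difficulty is that Lemma~\ref{lem:Phi}(c) compares $\Psi(y^1)$ and $\Psi(y^2)$ for two inputs under the \emph{uniform} (sup) norm on $[0,T]$; it does not directly speak to the time-regularity of a single output path. The standard trick to convert a spatial Lipschitz bound into a temporal one is a \emph{time-shift} argument.

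\textbf{Key steps.} Fix $T>0$ and $h\in(0,1)$. Define the shifted input $y^h=(\tilde u^h,\tilde v^h)$ by $\tilde u^h(t)=\tilde u(t+h)$, $\tilde v^h(t)=\tilde v(t+h)$; note $y^h\in\D(\tT)$ since $\tT$ is the fixed linear space $\{(u,v):e'v=0\}$ and shifting preserves membership. The semigroup/flow structure of the integral equations \eqref{eq:mapPhi1}--\eqref{eq:mapPhi2} shows that $\Psi(y^h)(t)$ coincides with the solution of the same equations started from the state $(\tilde x(h),\tilde z(h))$ with driving term $(\tilde u(\cdot+h)-\tilde u(h),\tilde v(\cdot+h)-\tilde v(h))$; equivalently, $\Psi(y^h)(t)=\big(\tilde x(t+h),\tilde z(t+h)\big)$ after accounting for the initial-value shift. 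Then I would apply Lemma~\ref{lem:Phi}(c) on the interval $[0,T+1]$ to the pair $y$ and $y^h$:
\[
\big\|\Psi(y^h)-\Psi(y)\big\|_T \le C(T+1)\,\big\|y^h-y\big\|_T \le C(T+1)\,|\mu\beta|\,h,
\]
where the last inequality uses that $|y^h(s)-y(s)| = |\tilde u(s+h)-\tilde u(s)| = |\mu\beta|h$ for all $s\le T$. This gives $|(\tilde x(t+h),\tilde z(t+h))-(\tilde x(t),\tilde z(t))|\le C(T+1)|\mu\beta|\,h$ for all $t\in[0,T]$, uniformly in $h\in(0,1)$, which is precisely local Lipschitz continuity in time with constant $C(T+1)|\mu\beta|$ on $[0,T]$.

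\textbf{Main obstacle.} The one step that requires care is the semigroup identity $\Psi(y^h)(\cdot)=(\tilde x(\cdot+h),\tilde z(\cdot+h))$, i.e., verifying that time-shifting the input corresponds to time-shifting the output modulo a shift of initial condition. This follows from the uniqueness in Lemma~\ref{lem:Phi}(a): one checks directly that $t\mapsto(\tilde x(t+h),\tilde z(t+h))$ satisfies \eqref{eq:mapPhi1}--\eqref{eq:mapPhi2} with input $t\mapsto(\tilde u(t+h),\tilde v(t+h))$ by splitting the integrals $\int_0^{t+h}=\int_0^{h}+\int_h^{t+h}$ and relabeling, so that the extra $\int_0^h$ terms get absorbed into the shifted driving term; since the input $(\tilde u(\cdot+h),\tilde v(\cdot+h))$ lies in $\D(\tT)$, uniqueness forces the identity. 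I would state this verification as a short paragraph rather than a displayed computation. Everything else is routine: the affine/constant form of $(\tilde u,\tilde v)$ makes the input bound trivial, and Lemma~\ref{lem:Phi}(c) does the heavy lifting.
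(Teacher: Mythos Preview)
Your time-shift strategy has a genuine gap: the identity $\Psi(y^h)(\cdot)=(\tilde x(\cdot+h),\tilde z(\cdot+h))$ is false. Evaluating at $t=0$, the first component of $\Psi(y^h)(0)$ equals $\tilde u(h)=\tilde x(0)-\mu\beta h$, whereas $\tilde x(h)=\tilde u(h)-\alpha\int_0^h(\tilde x(s))^+\,ds-e'R\int_0^h\tilde z(s)\,ds$; these differ by the integral terms. When you split $\int_0^{t+h}=\int_0^h+\int_h^{t+h}$, the extra $\int_0^h$ pieces do \emph{not} get absorbed into $(\tilde u(\cdot+h),\tilde v(\cdot+h))$: the input that actually produces $(\tilde x(\cdot+h),\tilde z(\cdot+h))$ via $\Psi$ is $\hat y^h(t)=y^h(t)-(c_1(h),c_2(h))$ with $c_1(h)=\alpha\int_0^h(\tilde x(s))^+\,ds+e'R\int_0^h\tilde z(s)\,ds$ and $c_2(h)=(I-pe')R\int_0^h\tilde z(s)\,ds$. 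Consequently your displayed bound $\|y^h-y\|_T\le|\mu\beta|\,h$ omits precisely the quantities that need to be controlled. The route can be repaired: membership in $\D(\tS)$ makes $(\tilde x,\tilde z)$ c\`adl\`ag, hence locally bounded on $[0,T+1]$, which yields $|c_1(h)|+|c_2(h)|\le C_T\,h$; applying Lemma~\ref{lem:Phi}(c) to $\hat y^h$ versus $y$ then gives the desired estimate. But this repair is essential and is not in your write-up.

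For comparison, the paper's proof avoids time shifts altogether and is much shorter. It first records that $(\tilde x,\tilde z)$ is continuous (hence locally bounded), then reads local Lipschitz continuity of $\tilde x$ directly from \eqref{eq:mapPhi1}, since $\tilde u$ is affine and integrals of locally bounded functions are locally Lipschitz. Finally, the elementary inequality $|(\tilde x(t))^- - (\tilde x(s))^-|\le|\tilde x(t)-\tilde x(s)|$ together with \eqref{eq:mapPhi2} yields local Lipschitz continuity of $\tilde z$. This argument uses only the structure of the integral equations and does not appeal to Lemma~\ref{lem:Phi}(c) at all.
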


We next formulate two important properties of the function ${g}$ with fluid model input.

\begin{lemma} \label{lem:gbetaflu}
Let $(\tilde x,\tilde z)$ be the fluid model on $\tS$ from Section~\ref{sec:Lyapunovfluid}.
\begin{enumerate}
\item [(a)]
For any $(\tilde x(0),\tilde z(0))\in \tS$, we have
\[
\frac{{dg(\tilde x(t), \tilde z (t))}}{{dt}} \le 0,
\]
whenever  $ {g (\tilde x(t), \tilde z(t))} $ is
differentiable at $t$.

\item [(b)] There are positive constants $c, C$ and $M$
such that, for any $(\tilde x(0),\tilde z(0))\in \tS$
such that $|(\tilde x(t), \tilde z(t)) | \ge M$
and such that $g(\tilde x(t),\tilde z(t))$ is differentiable at $t$,
we have
\[
- C \cdot g(\tilde x(t),\tilde z(t))  \le
\frac{dg(\tilde x(t),\tilde z(t))}{dt} \le
- c \cdot g(\tilde x(t),\tilde z(t)).
\]
\end{enumerate}
\end{lemma}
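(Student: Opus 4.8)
The plan is to differentiate $g(\tilde x(t),\tilde z(t))$ along the fluid path and to reduce the computation to two linear regimes selected by the sign of $\tilde x(t)$. By Lemma~\ref{lem:Lipfluid} the path is locally Lipschitz, hence differentiable at a.e.\ $t$; since $\tilde u$ in (\ref{eq:fluiduvn}) is affine and $\tilde v$ is constant, differentiating (\ref{eq:mapPhi1})--(\ref{eq:mapPhi2}) gives, for a.e.\ $t$,
\[
\dot{\tilde x}=-\mu\beta-\alpha\tilde x^{+}-e'R\tilde z,\qquad
\dot{\tilde z}=-p\,\frac{d}{dt}(\tilde x^{-})-(I-pe')R\tilde z,
\]
with $\frac{d}{dt}(\tilde x^{-})=-\dot{\tilde x}\,\mathbf 1_{\{\tilde x<0\}}$ a.e. Using $R\gamma=\mu p$, $e'\gamma=e'p=1$ and $(I-pe')p=0$ and switching to the centered variable $\hat z:=\tilde z+\beta\gamma$, the constant drift $\mu\beta$ cancels: on $\{\tilde x>0\}$ one is left with $\dot{\tilde x}=-\alpha\tilde x-e'R\hat z$, $\dot{\hat z}=-(I-pe')R\hat z$, and on $\{\tilde x<0\}$ (where $\tilde x=e'\tilde z=e'\hat z-\beta$ on $\tS$) with $\dot{\tilde x}=-e'R\hat z$, $\dot{\hat z}=-R\hat z$. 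The set $\{\tilde x=0\}$ is either Lebesgue-null or a time interval on which $\dot{\tilde x}=0$ and $\dot{\hat z}=-(I-pe')R\hat z$, which is subsumed under the first regime with $\tilde x=0$.

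In the regime $\tilde x\le 0$, substituting the dynamics into $\frac{d}{dt}g(\tilde x,\tilde z)$ produces, for $\beta\ge0$, the pure quadratic form $\frac{dg}{dt}=-\hat z'\bigl[\kappa(QR+R'Q)+ee'R+R'ee'\bigr]\hat z$, which is negative definite once $\kappa$ exceeds an absolute constant because $QR+R'Q$ is positive definite by Lemma~\ref{lem:cqlf}; this gives part (a) here and, since $g\le C|\hat z|^{2}$ on $\tS\cap\{\tilde x\le 0\}$ for $|\hat z|$ large, the upper bound in (b). For $\beta<0$ the same computation carries an extra linear term $-2\alpha(\mu-\alpha)\beta\,e'R\hat z$ produced by the $(\alpha x+\mu\beta)$-centering in (\ref{eq:gbeta}), but here $e'\hat z=\tilde x+\beta\le\beta<0$ forces $|\hat z|\ge|\beta|/\sqrt K$, so the negative term of order $\kappa|\hat z|^{2}$ dominates the linear term (of order $|\beta|\,|\hat z|$) as soon as $\kappa$ exceeds an absolute constant, and (a)--(b) follow as before. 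The lower bound in (b) is routine in both cases: the fluid ODE is affine with bounded coefficients, so $\bigl|\frac{dg}{dt}\bigr|\le C(|\hat z|^{2}+1)\le C'g$ once $|(\tilde x,\tilde z)|$ is large.

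In the regime $\tilde x\ge 0$ one has $e'\tilde z=0$ on $\tS$; writing $\hat z=\beta\gamma+z$ with $e'z=0$ and using $Q\gamma=be$ (Lemma~\ref{lem:Qgammaequalse}) together with $(I-pe')R\gamma=0$, the quadratic contribution collapses to $z'Mz$ with $M:=Q(I-pe')R+R'(I-ep')Q$ positive semidefinite. Collecting terms, one finds $\frac{dg}{dt}=-2\alpha w^{2}-2\alpha w\,e'Rz-\kappa z'Mz$ with $w:=\alpha\tilde x+\mu\beta$ when $\beta<0$ — a quadratic form that is negative definite on $\{(w,z):e'z=0\}$ for $\kappa$ large (Schur complement against $2\alpha$), which gives (a) outright — and $\frac{dg}{dt}=-2\alpha v^{2}+2(\alpha-\mu)\beta v-2v\,e'Rz-\kappa z'Mz$ with $v:=\tilde x+\beta\ge\beta\ge0$ when $\beta\ge0$; in the latter case, after maximizing the $z$-part, one is left with a downward parabola in $v$ whose maximum over $v\ge\beta$ is attained at $v=\beta$ (i.e.\ $\tilde x=0$), with value $-2\mu\beta^{2}+O(1/\kappa)\le 0$ for $\kappa$ large, so (a) again holds. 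In both sub-cases the quadratic part of $\frac{dg}{dt}$ is negative definite once $\kappa$ is fixed, hence the upper bound in (b) for $|(\tilde x,\tilde z)|$ large, and the lower bound follows as before. Observe that the two centerings in (\ref{eq:gbeta}) are matched to the region containing the equilibrium of the fluid dynamics — $(-\beta,-\beta\gamma)$ when $\beta\ge0$, which lies in $\{\tilde x\le 0\}$, and $(-\mu\beta/\alpha,0)$ when $\beta<0$, which lies in $\{\tilde x\ge0\}$ — so that in that region the first term of $g$ vanishes at the equilibrium and $\frac{dg}{dt}$ is a clean negative-definite form, while the other region requires only the mild extra argument above.

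The decisive structural input — and what I expect to be the main obstacle — is that $M$ is in fact positive definite on the hyperplane $\{e'z=0\}$, equivalently $\ker M=\mathrm{span}(\gamma)$; only positive \emph{semi}definiteness is asserted in Lemma~\ref{lem:cqlf}. Intuitively this holds because $-(I-pe')R$ governs the phase process augmented by a restart at phase $p$ upon absorption, which is ergodic on $\{1,\dots,K\}$ under the standing assumptions on $(p,\nu,P)$, so its only zero mode is the stationary vector $\gamma$; I would establish it from the structure of $R$ and $p$, drawing on the arguments behind Lemma~\ref{lem:cqlf}. This positive-definiteness is exactly what makes phase imbalance dissipate in the regime $\tilde x\ge0$ even though abandonment exerts essentially no pressure there — the phenomenon the introduction flags as ``by no means obvious'' — and without it the quadratic forms above would be only negative semidefinite and the estimates in (b) would fail. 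The remaining work is bookkeeping: fixing one value of $\kappa$ that makes all the quadratic-form dominations hold simultaneously, checking the growth comparisons relating $g$ to $|(\tilde x,\tilde z)|^{2}$ on $\tS$ in each regime, and verifying the claims about the null set / time interval $\{\tilde x=0\}$.
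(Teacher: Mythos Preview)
Your proposal is correct and follows essentially the same approach as the paper: a four-way case split on the signs of $\beta$ and $\tilde x(t)$, direct computation of $\frac{d}{dt}g$ from the fluid ODEs (\ref{eq:dynamicxpos})--(\ref{eq:dynamicxneg}), and absorption of cross terms by choosing $\kappa$ large. The ``decisive structural input'' you flag --- that $M=Q(I-pe')R+R'(I-ep')Q$ is positive definite on $\{e'z=0\}$, equivalently $\ker M=\mathrm{span}(\gamma)$ --- is exactly part~(b) of Lemma~\ref{lem:Qgammaequalse}, which the paper proves and then packages as Lemma~\ref{lem:boundsecondpart}, so there is no remaining obstacle.
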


With these three lemmas at our disposal,
we are ready to prove the negative drift condition for the fluid model.

\begin{proof}[Proof of Lemma \ref{lem:fluidexpdif}]
Throughout this proof, when using constants from auxiliary lemmas, their subscript
denotes the lemma they originated from.
\newcommand{\CLip}{C_{\ref{lem:Lipschitz}}}
\newcommand{\Cgbeta}{C_{\ref{lem:gbetaflu}}}
\newcommand{\cgbeta}{c_{\ref{lem:gbetaflu}}}
\newcommand{\Mgbeta}{M_{\ref{lem:gbetaflu}}}

Since $g(-\beta,-\beta\gamma)=0$ or $g(-\mu\beta/\alpha,-\beta\gamma)=0$,
one obtains from Lemma~\ref{lem:Lipschitz} that there exists some $\CLip$ such that for arbitrary $(x,z)$
\begin{eqnarray} \label{eq:glessxz}
\sqrt{ g (x,z) } \le \CLip|(x,z)| +  \CLip\max \{ |(\beta, \beta
\gamma)|, |( {\mu \beta}/{\alpha}, \beta \gamma )|\} .
\end{eqnarray}
Let $\cgbeta,\Cgbeta,\Mgbeta$ be the constants defined in Lemma~\ref{lem:gbetaflu}, and set
\[
M =   \CLip \Mgbeta + \CLip \max \{ |(\beta, \beta \gamma)|, |( {\mu
\beta}/{\alpha}, \beta \gamma )|\}.
\]
It follows from (\ref{eq:glessxz}) that if
\begin{displaymath}
\sqrt{g(\tilde x(t), \tilde z (t))} \ge M,
\end{displaymath}
 we have $|(\tilde x(t), \tilde z (t))| \ge \Mgbeta$ and thus by the second part of
Lemma~\ref{lem:gbetaflu},
\begin{equation}
\label{eq:expdrift2} - \Cgbeta \le \frac{d}{dt} \ln g(\tilde x(t),\tilde z(t)) \le  - \cgbeta.
\end{equation}

Pick some $C'$ such that
\begin{eqnarray} \label{const:C5}
C'> M \cdot \exp \left( \Cgbeta t_0 /2\right).
\end{eqnarray}
 The constants $\epsilon$ and $C$ from the statement of the lemma can be chosen as follows:
\[
\epsilon = 1-\exp\left(-\frac 12 \cgbeta t_0\right), \quad\quad
C=\epsilon C',
\]
as we now verify.
If $\sqrt{g(\tilde x(0),\tilde z(0))} \le C'$, then we have,
by Lemma~\ref{lem:Lipfluid} and the first part of Lemma~\ref{lem:gbetaflu},
\[
\sqrt{g(\tilde x(t_0),\tilde z(t_0))} -\sqrt{g(\tilde x(0),\tilde z(0))} \le 0 = C-\epsilon C' \le C-\epsilon \sqrt{g(\tilde x(0),\tilde z(0))}.
\]
We next consider the case $\sqrt{g(\tilde x(0), \tilde z (0))} >C'$.
We want to show that in this case
\begin{equation} \label{eq:gblowerbound}
\sqrt{g(\tilde x(t), \tilde z (t))}\ge M \quad \text{for all
$t \in [0,t_0]$, }
\end{equation}
which, together with (\ref{eq:expdrift2}), implies that
\[
\sqrt{g(\tilde x(t_0),\tilde z(t_0))} \le \exp(-\cgbeta t_0/2)
\sqrt{g(\tilde x(0),\tilde z(0))} =(1-\epsilon) \sqrt{g(\tilde x(0),\tilde z(0))}.
\]

To establish (\ref{eq:gblowerbound}), we assume that $ \sqrt{g(\tilde x(0), \tilde z (0))} >C' $
and define
\[
\tau = \inf \{t \ge 0: \sqrt{g(\tilde x(t), \tilde z (t))} < M\}.
\]
One readily checks that $\tau >0$ and we now show that in fact $\tau > t_0$.
If $\tau = \infty,$ the claim is true.  Now we
assume $\tau < \infty$. For $t \in [0, \tau)$ we have
\begin{equation*}
\sqrt{g(\tilde x(t), \tilde z (t))}\ge M,
\end{equation*}
where we have used the definition $\tau$.
Now we can apply (\ref{eq:expdrift2}) for $t \in [0, \tau),$ and obtain
\[
\ln g(\tilde x(0),\tilde z(0))-
\Cgbeta t \le  \ln g(\tilde x(t),\tilde z(t)) \le  \ln
g(\tilde x(0),\tilde z(0)) - \cgbeta t.
\]
On combining this with the definition of $\tau,$ it follows that if
$\sqrt{g(\tilde x(0), \tilde z (0))}
>C',$
\begin{eqnarray*}
\ln M = \ln \sqrt{g(\tilde x( \tau ),\tilde z( \tau ))}
&\ge& \ln
\sqrt{g(\tilde x(0),\tilde z(0))}- \Cgbeta \tau/2 \\
&\ge& \ln C' - \Cgbeta \tau/2 \\
&> & \frac 12\Cgbeta (t_0 - \tau)+  \ln M,
\end{eqnarray*}
where we use (\ref{const:C5}) in the last inequality.
This shows that $\tau> t_0$, which proves (\ref{eq:gblowerbound})
and therefore the statement of the lemma.
\end{proof}

\subsection*{Proofs of auxiliary lemmas}
We now prove Lemmas~\ref{lem:Lipschitz}, \ref{lem:Lipfluid}, and \ref{lem:gbetaflu}.

\begin{proof}[Proof of Lemma~\ref{lem:Lipschitz}]
We first discuss the case $\beta \ge 0.$
In that case,  we have $g(x,z) = \| (x+\beta, z+\beta\gamma) \|^2_{Q}$,
where
\[
\| (x,z) \|^2_{Q}  = x^2 + \kappa z' Q z.
\]
Note that $\|\cdot\|_{ Q} $ defines a norm since $Q$ is positive definite.

Thus for $\beta \ge 0$, there exists a constant $C>0$ such that
\begin{eqnarray*}
\left|\sqrt{g(x_1,z_1)}-\sqrt{g(x_2,z_2)}\right| &=&
| \| (x_1+\beta, z_1+ \beta \gamma)\|_{ Q} - \| (x_2+\beta, z_2+
\beta \gamma)\|_{ Q} | \\
&\le& \| (x_1+\beta, z_1+ \beta \gamma)- (x_2+\beta, z_2+ \beta
\gamma)  \|_{ Q} \\
&= & \| (x_1 - x_2, z_1 -z_2) \|_{ Q} \\
&\le & C |(x_1,z_1) - (x_2,z_2)|,
\end{eqnarray*}
where the first inequality follows from the subadditivity property
of the norm $\| \cdot \|_{ Q}$, and the last inequality follows
from the equivalence of the $|\cdot|$-norm and the $\| \cdot \|_{ Q}$-norm
on the Euclidean space $\R^{K+1}$. We therefore obtain the claim when $\beta \ge 0$.
The claim for $\beta<0$ can be established similarly.
\end{proof}

\begin{proof}[Proof of Lemma~\ref{lem:Lipfluid}]
Since $t\mapsto (\tilde u(t), \tilde v (t) )$ is continuous in $t$, we deduce
from Lemma~9 in \cite{DaiHeTezcan10} that $t\mapsto ( \tilde x(t),
\tilde z (t))$ is also continuous in $t$.
It follows from (\ref{eq:mapPhi1}) that $t\mapsto \tilde x(t)$ is locally
Lipschitz continuous. Moreover, using the fact that for all $s, t\ge0$,
\[|{\tilde x (t)}^- -{\tilde x (s)}^- | \le |\tilde x (t)-\tilde x (s)
| ,\] we conclude from (\ref{eq:mapPhi2}) that $\tilde z(\cdot)$
is also locally Lipschitz continuous, hence $(\tilde x(\cdot),
\tilde z (\cdot))$ is locally Lipschitz.
\end{proof}

For the proof of Lemma~\ref{lem:gbetaflu}, we need to study derivatives of the fluid model.
From (\ref{eq:mapPhi1}), (\ref{eq:mapPhi2}),
(\ref{eq:fluiduvn}) and (\ref{eq:xzfluid}), it is straightforward to see
that when $ \tilde x(t) \ge 0$, we have $e'\tilde z(t)=0$ and
\begin{eqnarray} \label{eq:dynamicxpos}
\frac{d}{{dt}}\left[ \begin{array}{l}
 \tilde  x (t) \\
  \tilde z (t) \\
 \end{array} \right] =  \left( \begin{array}{l}
 -\mu \beta \\
  0  \\
 \end{array} \right)- \left( {\begin{array}{*{20}{c}}
   \alpha  & {e'R}  \\
   0 & {(I - pe')R}  \\
\end{array}} \right)\left[ \begin{array}{l}
 \tilde x (t)\\
  \tilde z (t) \\
 \end{array} \right].
 \end{eqnarray}
On the other hand, when $ \tilde x(t)<0$, we have $e' \tilde
z(t)=\tilde x(t), $ and
\begin{eqnarray} \label{eq:dynamicxneg}
\frac{d}{{dt}}\left[ \begin{array}{l}
 \tilde  x (t) \\
  \tilde z  (t) \\
 \end{array} \right] =
  \left( \begin{array}{l}
 -\mu \beta \\
   -\mu \beta p \\
 \end{array} \right) - \left( {\begin{array}{*{20}{c}}
   0 & {e'R}  \\
   0 & R  \\
\end{array}} \right)\left[ \begin{array}{l}
 \tilde x(t) \\
 \tilde z (t) \\
 \end{array} \right].
 \end{eqnarray}

We need two properties of the matrix $Q$
from Lemma~\ref{lem:cqlf}, which are recorded in the following lemma.

\begin{lemma}
\label{lem:Qgammaequalse}
Let $Q$ be a positive definite matrix from Lemma~\ref{lem:cqlf}.
\begin{enumerate}
\item[(a)] \gaonote{$Q\gamma = b e$ for some $b>0$.}
\item[(b)]
Up to a multiplicative constant, $\gamma$ is the only vector satisfying
\[
[Q (I-pe')R + R'(I-ep') Q]\gamma=0.
\]
\end{enumerate}
\end{lemma}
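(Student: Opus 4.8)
The plan is to deduce both claims directly from the two relations of Lemma~\ref{lem:cqlf}, using the elementary identities $R\gamma=\mu p$ (immediate from $\gamma=\mu R^{-1}p$ and $R=(I-P')\diag(\nu)$), $e'p=1$ (since $p$ is a probability vector), and $e'\gamma=1$ (established just after~(\ref{eq:gamma})).

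\noindent\textbf{Part (a).} Set $M=Q(I-pe')R+R'(I-ep')Q$, which is symmetric and positive semi-definite by Lemma~\ref{lem:cqlf}. First I would observe that $(I-pe')R\gamma=(I-pe')\mu p=\mu\bigl(p-(e'p)p\bigr)=0$, so that $M\gamma=R'(I-ep')Q\gamma$ and
\[
\gamma'M\gamma=(R\gamma)'(I-ep')Q\gamma=\mu\,p'(I-ep')Q\gamma=\mu\bigl(1-e'p\bigr)\,p'Q\gamma=0.
\]
Since $M$ is positive semi-definite, $\gamma'M\gamma=0$ forces $M\gamma=0$, hence $R'(I-ep')Q\gamma=0$; as $R$, and therefore $R'$, is invertible, this gives $(I-ep')Q\gamma=0$, i.e.\ $Q\gamma=(p'Q\gamma)e$. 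Writing $b=p'Q\gamma$ we obtain $Q\gamma=be$, and then $\gamma'Q\gamma=\gamma'(be)=b\,e'\gamma=b$; since $Q$ is positive definite and $\gamma\neq0$ (because $e'\gamma=1$), this shows $b=\gamma'Q\gamma>0$.

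\noindent\textbf{Part (b).} From part (a) we already know $\gamma\in\ker M$, so it remains to prove $\dim\ker M\le1$. I would rewrite $M=N-ac'-ca'$, where $N=QR+R'Q$ is positive definite by Lemma~\ref{lem:cqlf}, $a=Qp$, and $c=R'e$; note $a\neq0$ because $Q$ is invertible and $p\neq0$. Then $Mv=0$ is equivalent to $Nv=(c'v)a+(a'v)c$, i.e.\ $v=(c'v)N^{-1}a+(a'v)N^{-1}c$, so $\ker M\subseteq\mathrm{span}\{N^{-1}a,N^{-1}c\}$ and a fortiori $\dim\ker M\le2$. To rule out dimension two, split into two cases. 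If $a$ and $c$ are linearly dependent, then $M=N-\theta aa'$ for some scalar $\theta$ is a rank-one perturbation of the invertible matrix $N$, so $\dim\ker M\le1$. If $a$ and $c$ are linearly independent, substitute $v=sN^{-1}a+tN^{-1}c$ into the constraint; matching the coefficients of the independent vectors $a$ and $c$ reduces membership of $v$ in $\ker M$ to a homogeneous $2\times2$ system for $(s,t)$ whose coefficient matrix has the diagonal entry $a'N^{-1}a>0$ (positive since $a\neq0$ and $N^{-1}$ is positive definite) and is hence not the zero matrix, so its solution set is at most one-dimensional; as $v$ is then determined by $(s,t)$, again $\dim\ker M\le1$. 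Combining the two cases with $\gamma\in\ker M\setminus\{0\}$ yields $\ker M=\R\gamma$, which is exactly part (b).

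The computations in part (a), and the case of linearly dependent $a,c$ in part (b), are routine; the step I expect to require the most care is the dimension count for $\ker M$ when $a$ and $c$ are independent, and this is precisely where the non-degeneracy $Qp\neq0$ (equivalently, that $Q$ does not annihilate the initial phase distribution $p$) enters.
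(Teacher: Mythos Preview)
Your proof of part (a) is essentially identical to the paper's: both verify $\gamma'M\gamma=0$ for the positive semi-definite matrix $M=Q(I-pe')R+R'(I-ep')Q$, deduce $M\gamma=0$, use $(I-pe')R\gamma=0$ to isolate $R'(I-ep')Q\gamma=0$, and conclude $Q\gamma=be$ with $b=\gamma'Q\gamma>0$.

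For part (b) you take a genuinely different route. The paper exploits part (a) (specifically $(I-ep')QR^{-1}p=0$) to obtain the factorization
\[
M \;=\; R'(I-ep')\bigl[(R^{-1})'Q+QR^{-1}\bigr](I-pe')R,
\]
and then argues that the inner matrix is positive definite (it is a congruence of $QR+R'Q$) while $(I-pe')R$ has a one-dimensional kernel spanned by $\gamma$; this immediately gives $\ker M=\R\gamma$. Your approach instead writes $M=N-ac'-ca'$ with $N=QR+R'Q$, $a=Qp$, $c=R'e$, observes that any $v\in\ker M$ lies in $\mathrm{span}\{N^{-1}a,N^{-1}c\}$, and then does a $2\times2$ dimension count. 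Both arguments are correct; the paper's factorization is more structural and makes the positive semi-definiteness of $M$ transparent, while your perturbation argument is more elementary and does not rely on part (a) beyond knowing $\gamma\in\ker M$.

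One small slip: in your $2\times2$ system the quantity $a'N^{-1}a$ appears as an \emph{off-diagonal} entry, not a diagonal one (the diagonal entries are $1-c'N^{-1}a$ and $1-a'N^{-1}c$). This does not affect the argument, since any nonzero entry suffices to ensure the $2\times2$ coefficient matrix has rank at least one and hence nullity at most one.
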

\begin{proof}
One directly verifies that $\gamma= \mu R^{-1} p$ satisfies
\[
\gamma'[Q (I-pe')R + R'(I-ep') Q] \gamma=0.
\]
Lemma~\ref{lem:cqlf} states that $Q(I-pe')R+ R'(I - ep')Q$ is a
positive semi-definite matrix, thus we deduce that
\[
[Q (I-pe')R + R'(I-ep') Q]\gamma =0,
\]
which immediately implies that $Q(I-pe')R \gamma=0$ by definition of $\gamma$.
Since $(I-pe')$ has a simple zero eigenvalue and $R$ is nonsingular, we must have
\begin{equation*}
Q\gamma=b e \quad \text{for some $b \ne 0$}.
\end{equation*}
In addition, left-multiplying $\gamma'$ at both sides of the above equality, we obtain that $b=\gamma'Q\gamma>0$, using the fact that $Q$ is positive definite.

As a corollary to part (a), we obtain $(I-ep') Q R^{-1} p = 0$, which we use in the proof of part (b):
it implies that
\begin{eqnarray*}
\lefteqn{Q (I-pe')R + R' (I-ep') Q} \nonumber \\
&&=  R' (I-ep')\cdot [ (R^{-1})' Q +  Q R^{-1}] \cdot (I-pe') R.
\end{eqnarray*}
Lemma~\ref{lem:cqlf} states that $QR+R'Q$ is positive definite.
After left-multiplying by $(R^{-1})'$ and right-multiplying by
$R^{-1}$, we find that $QR^{-1}+(R^{-1})'Q$ is also positive
definite. Since $(I-pe')R$ has rank $K-1$ and has a simple
zero eigenvalue with a right eigenvector $\gamma= \mu R^{-1} p$,
we obtain from the preceding display that
$\gamma= \mu R^{-1} p$ is the unique vector (up to a constant) such that
\[
\gamma'[Q (I-pe')R + R'(I-ep') Q]\gamma=0.
\]
This implies part (b) of the lemma.
\end{proof}

The next corollary controls the term involving $z$ in our Lyapunov function.

\begin{lemma}
\label{lem:boundsecondpart}
There exist constants $c,C>0$ such that, whenever the derivative exists,
\[
-C|\tilde z(t)|^2\le
\frac{d}{dt}  \left[(\tilde z(t)+\beta\gamma)'Q(\tilde z(t)+\beta\gamma) \right]
\le -c|\tilde z(t)|^2,
\]
as long as $e'\tilde z(t)=0$ or equivalently $\tilde x(t)\ge 0$.
\end{lemma}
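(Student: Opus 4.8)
The plan is to combine the linear dynamics of $\tilde z$ in the region $\tilde x(t)\ge 0$ with the fact, supplied by Lemmas~\ref{lem:cqlf} and~\ref{lem:Qgammaequalse}, that $\gamma$ spans the kernel of the symmetric positive semidefinite matrix $N:=Q(I-pe')R+R'(I-ep')Q$. First I would record that when $\tilde x(t)\ge 0$, equivalently $e'\tilde z(t)=0$, the fluid dynamics~(\ref{eq:dynamicxpos}) reduce to $\frac{d}{dt}\tilde z(t)=-(I-pe')R\,\tilde z(t)$ wherever the derivative exists. Since $R\gamma=\mu p$ and $e'p=1$, we have $(I-pe')R\gamma=0$, so, writing $w(t):=\tilde z(t)+\beta\gamma$, also $\frac{d}{dt}w(t)=-(I-pe')R\,w(t)$. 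Using symmetry of $Q$ this gives
\[
\frac{d}{dt}\big[w(t)'Qw(t)\big]=-w(t)'\big[Q(I-pe')R+R'(I-ep')Q\big]w(t)=-w(t)'Nw(t).
\]

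Next I would remove the shift by $\beta\gamma$. By Lemma~\ref{lem:cqlf}, $N$ is symmetric and positive semidefinite; by Lemma~\ref{lem:Qgammaequalse}(b), $\ker N=\mathrm{span}\{\gamma\}$, so $\gamma'N=(N\gamma)'=0$ and the cross terms vanish:
\[
w(t)'Nw(t)=(\tilde z(t)+\beta\gamma)'N(\tilde z(t)+\beta\gamma)=\tilde z(t)'N\,\tilde z(t).
\]
It then suffices to prove $c\,|\tilde z(t)|^2\le \tilde z(t)'N\,\tilde z(t)\le C\,|\tilde z(t)|^2$ whenever $e'\tilde z(t)=0$. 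The upper bound holds with $C=|N|$. For the lower bound, note that $e'\gamma=\sum_k\gamma_k=1\ne 0$, so $\mathrm{span}\{\gamma\}$ intersects the hyperplane $\{v\in\R^K:e'v=0\}$ only at the origin; hence any nonzero $v$ in that hyperplane lies outside $\ker N$, and since $v'Nv=0\iff Nv=0$ for a positive semidefinite matrix, we get $v'Nv>0$. Compactness of the unit sphere in the hyperplane then yields a constant $c>0$ with $v'Nv\ge c\,|v|^2$ for all $v$ in it, and taking $v=\tilde z(t)$ completes the proof.

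The argument is short, and I do not anticipate a genuine obstacle beyond bookkeeping. The point that deserves care is the cancellation of the $\beta\gamma$ shift: it works precisely because $\gamma$ lies in $\ker N$, which is exactly where the ``off-center'' quadratic term in~(\ref{eq:gbeta}) meshes with the structure of the service dynamics, together with the elementary fact that a positive semidefinite form with one-dimensional kernel $\mathrm{span}\{\gamma\}$ is coercive on any subspace transverse to $\gamma$ --- here the hyperplane $e^{\perp}$ in which $\tilde z$ always lies.
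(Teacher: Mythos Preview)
Your proposal is correct and follows essentially the same approach as the paper: compute the derivative via the linear dynamics, reduce the quadratic form to $-\tilde z(t)'N\tilde z(t)$, and conclude by a compactness argument on the unit sphere of $\{v:e'v=0\}$ using that $\ker N=\mathrm{span}\{\gamma\}$ is transverse to this hyperplane. The only cosmetic difference is in how the $\beta\gamma$ shift is eliminated: the paper invokes part~(a) of Lemma~\ref{lem:Qgammaequalse} ($Q\gamma=be$, hence $\gamma'Q(I-pe')=be'(I-pe')=0$) directly in the derivative, whereas you first note $(I-pe')R\gamma=0$ to shift the ODE and then use $N\gamma=0$ from part~(b); both routes yield the same identity.
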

\begin{proof}
It is readily seen with part (a) of Lemma~\ref{lem:Qgammaequalse} that
\begin{eqnarray*}
\frac{d}{dt}  \left[(\tilde z(t)+\beta\gamma)'Q(\tilde z(t)+\beta\gamma)\right]  &=&
2(\tilde z(t)+\beta\gamma)'Q \frac{d}{dt} \tilde z(t) \\
&=&\tilde z(t)'[Q (I-pe')R + R'(I-ep') Q] \tilde z(t).
\end{eqnarray*}
Since $e'\gamma=1\ne 0$, we deduce from Lemma~\ref{lem:cqlf} and
part (b) of Lemma~\ref{lem:Qgammaequalse} that the quadratic form $h'[Q (I-pe')R +
R'(I-ep') Q] h$ has a global (nonzero) minimum and maximum over the
compact set $\{h \in \R^K: e'h = 0, |h| =1\}$.
This yields the statement of the lemma.
\end{proof}

We are now ready to prove Lemma~\ref{lem:gbetaflu}.

\begin{proof}[Proof of Lemma~\ref{lem:gbetaflu}]
We discuss the cases $\beta \ge 0$ and $\beta<0$ separately.

\textbf{Case 1: $\beta \ge 0.$}
In this case, we obtain from (\ref{eq:gbeta}) that
\begin{equation}
\label{eq:startingpointpos}
\frac{d}{dt} g(\tilde x(t),\tilde z(t)) = \frac{d}{dt} (\tilde x(t)+\beta)^2 +
\kappa \frac{d}{dt}  \left[(\tilde z(t)+\beta\gamma)'Q(\tilde z(t)+\beta\gamma) \right].
\end{equation}
To compute the derivative with respect to $t$, we discuss two subcases.


\textbf{Case 1.1: $\beta \ge 0,$ $\tilde x(t) \ge 0. $}

In this case we have $e' \tilde z(t)=0$.
We use the differential equation (\ref{eq:dynamicxpos})
to rewrite the expression in (\ref{eq:startingpointpos}).
For the first term, this yields
\begin{equation} \label{eq:dgdtbeatxpos}
\frac{d}{dt} (\tilde x(t)+\beta)^2 = -2( \tilde x(t)+ \beta) (\alpha \tilde x(t) +\mu \beta + e'R \tilde z(t)).
\end{equation}
To bound this further, we use
our assumption that $\beta$ and $\tilde x(t)$ are nonnegative, which implies that
\[
 (\alpha \wedge \mu) (\tilde x(t)+\beta)  \le \mu \beta + \alpha \tilde x(t) \le (\alpha \vee \mu) (\tilde
  x(t)+\beta),
\]
where $\alpha \wedge \mu =\min\{ \alpha, \mu\}$ and $\alpha \vee \mu=\max\{ \alpha, \mu\}$.

We bound (\ref{eq:dgdtbeatxpos}) from above as follows.
Since $\alpha \wedge \mu>0$, we deduce that there exists some (large) $\kappa$ so that
\begin{eqnarray*}
|2( \tilde x(t)+ \beta) e'R \tilde z(t)| &\le& 2 |e'R| \cdot
(\tilde x(t)+ \beta) \cdot |\tilde z(t)|  \\
&\le& (\alpha \wedge \mu) (\tilde x(t)+ \beta)^2 +
\kappa\frac{c_{\ref{lem:boundsecondpart}}}{2} |\tilde z(t)|^2,
\end{eqnarray*}
where $c_{\ref{lem:boundsecondpart}}$ is the constant from Lemma~\ref{lem:boundsecondpart}.
We have thus obtained
\[
\frac{d}{dt} (\tilde x(t)+\beta)^2  \le - (\alpha \wedge \mu) ( \tilde x(t)+ \beta)^2
+ \kappa\frac{c_{\ref{lem:boundsecondpart}}}{2} |\tilde z(t)|^2.
\]
Combining this with Lemma~\ref{lem:boundsecondpart} and (\ref{eq:startingpointpos}), this yields
\[
\frac{d}{dt} g(\tilde x(t),\tilde z(t)) \le - (\alpha \wedge \mu) ( \tilde x(t)+ \beta)^2
- \kappa\frac{c_{\ref{lem:boundsecondpart}}}{2} |\tilde z(t)|^2,
\]
which establishes part (a) of the statement in Case 1.1.
It also gives the upper bound claimed in part (b), since the positive definiteness of $Q$ yields
a constant $c'>0$ such that
\[
|\tilde z(t)|^2 \ge c' \tilde z(t)'Q\tilde z(t) \ge \frac 12 c' (\tilde z(t)+\beta\gamma)'Q(\tilde z(t)+\beta\gamma),
\]
where the last inequality holds outside of some compact set.
To prove the lower bound claimed in part (b), we similarly note that
\begin{eqnarray*}
\frac{d}{dt} (\tilde x(t)+\beta)^2 &\ge& -2 (\alpha \vee \mu)
( \tilde x(t)+ \beta)^2
- 2(\tilde x(t)+\beta)e'R\tilde z(t) \\
&\ge& -[2 (\alpha \vee \mu)+(\alpha\wedge\mu)] ( \tilde x(t)+ \beta)^2
- \kappa\frac{c_{\ref{lem:boundsecondpart}}}{2} |\tilde z(t)|^2,
\end{eqnarray*}
and one can bound the second term in (\ref{eq:startingpointpos}) with Lemma~\ref{lem:boundsecondpart}.
We conclude that
\[
\frac{d}{dt} g(\tilde x(t),\tilde z(t)) \ge -[2 (\alpha \vee \mu)+(\alpha\wedge\mu)] ( \tilde x(t)+ \beta)^2
- \kappa\left(\frac{c_{\ref{lem:boundsecondpart}}}{2}+C_{\ref{lem:boundsecondpart}}\right) |\tilde z(t)|^2.
\]
By positive definiteness of $Q$ there exists some constant $C'>0$
such that, outside of some compact set,

\[
|\tilde z(t)|^2 \le C' \tilde z(t)'Q\tilde z(t) \le 2C' (\tilde z(t)+\beta\gamma)'Q(\tilde z(t)+\beta\gamma),
\]
and we have thus shown all the claims in case $\beta\ge0$ and $\tilde x(t)\ge 0$.

\textbf{Case 1.2: $\beta \ge 0,$ $\tilde x(t) < 0.$ }

In this case one has $e' \tilde z(t)=\tilde x(t) $. We use the differential equation
(\ref{eq:dynamicxneg}) to rewrite (\ref{eq:startingpointpos}).
This leads to
\begin{eqnarray*}
\frac{{dg(\tilde x(t),\tilde z(t))}}{{dt}}&=& 2( \tilde
x(t)+ \beta) (- \mu \beta -
e'R \tilde z(t)) \\
&&- \kappa(\tilde z(t)+ \beta \gamma)' [Q R +
R'Q](\tilde z(t)+ \beta \gamma)\\
&=&-2( \tilde z(t)+ \beta\gamma)' ee'R ( \tilde z(t)+ \beta \gamma) \\
&&- \kappa(\tilde z(t)+ \beta \gamma)' [Q R +
R' Q](\tilde z(t)+ \beta \gamma)\\
&=&-( \tilde z(t)+ \beta \gamma)' [e e'R +R'ee'+ \kappa(Q R + R' Q)](\tilde z(t)+ \beta \gamma),
\end{eqnarray*}
where we use $\gamma=\mu R^{-1} p$ and $ \tilde
x(t)+ \beta =e' ( \tilde z(t)+ \beta \gamma)$.
It follows from Lemma~\ref{lem:cqlf} that we can choose $\kappa$ large so that
$e e'R +R'ee'+ \kappa(Q R + R' Q)$ is positive definite.
This immediately yields part (a) of the statement in case 1.2.

By definition of $g$, again using $e'\tilde z(t)=\tilde x(t)$, we also have
\[
g(\tilde x(t),\tilde z(t) )=
 ( \tilde z(t) +\beta \gamma)'[e e' + \kappa Q]( \tilde z(t) +\beta \gamma).
\]
Since $ee'+\kappa Q$ is also positive definite, the proof for the case
$\beta \ge 0$ and $\tilde x(t) < 0$ is also complete.

\textbf{Case 2: $\beta < 0.$}

In this case, we obtain from (\ref{eq:gbeta}) that
\begin{equation}
\label{eq:startingpointneg} \frac{d}{dt} g(\tilde x(t),\tilde z(t))
= \frac{d}{dt} (\alpha \tilde x(t)+\mu \beta)^2 + \kappa
\frac{d}{dt} \left[(\tilde z(t)+\beta\gamma)'Q(\tilde
z(t)+\beta\gamma) \right].
\end{equation}
As in Case 1, we discuss two subcases.

\textbf{Case 2.1: $\beta < 0,$ $\tilde x(t) \ge 0.$ }
We use the differential equation (\ref{eq:dynamicxpos})
to rewrite the expression in (\ref{eq:startingpointneg}).
For the first term, this yields
\begin{equation*}
\frac{d}{dt} (\alpha\tilde x(t)+\mu\beta)^2
= -2 \alpha (\alpha  \tilde x(t)+ \mu \beta) (\alpha \tilde x(t)+\mu \beta +e'R \tilde z(t)).
\end{equation*}
The rest of the argument is almost identical to the one for Case~1.1.
Increasing $\kappa$ if necessary, one can show that
\[
|2\alpha(\alpha\tilde x(t)+\mu\beta) e'R\tilde z(t)| \le \alpha (\alpha\tilde x(t)+\mu\beta)^2 +
\kappa\frac{c_{\ref{lem:boundsecondpart}}}{2} |\tilde z(t)|^2.
\]
This leads to
\begin{eqnarray*}
\frac{d}{dt} g(\tilde x(t),\tilde z(t)) &\le& - \alpha ( \alpha \tilde x(t)+ \mu \beta)^2
- \kappa\frac{c_{\ref{lem:boundsecondpart}}}{2} |\tilde z(t)|^2, \\
\frac{d}{dt} g(\tilde x(t),\tilde z(t)) &\ge& -3\alpha ( \alpha\tilde x(t)+ \mu \beta)^2
- \kappa\left(\frac{c_{\ref{lem:boundsecondpart}}}{2}+C_{\ref{lem:boundsecondpart}}\right) |\tilde z(t)|^2.
\end{eqnarray*}
The first inequality immediately establishes part (a) for $\beta<0$, $\tilde x(t)\ge 0$.
For part (b), one uses these two inequalities with the same arguments as in Case~1.1.

\textbf{Case 2.2: $\beta<0,$ $\tilde x(t) < 0.$}
In this case we have $e' \tilde z(t)= \tilde x(t)$.
We use the differential equation (\ref{eq:dynamicxneg})
to rewrite the expression in (\ref{eq:startingpointneg}).
This leads to
\begin{eqnarray*}
\frac{d}{dt}  g(\tilde x(t),\tilde z(t))
&=&  -2 \alpha (\alpha  \tilde x(t)+ \mu \beta) (\mu \beta + e'R \tilde z(t)) \\
&& - \kappa(\tilde z(t)+ \beta \gamma)' [Q R + R'Q](\tilde z(t)+ \beta \gamma) \\
&=&-2 \alpha ( \alpha  \tilde z(t)+ \mu \beta \gamma)'e e'R (\tilde z(t)+ \beta \gamma)\\
&& - \kappa(\tilde z(t)+ \beta \gamma)' [Q R +R'Q](\tilde z(t)+ \beta \gamma),
\end{eqnarray*}
where we use $\tilde x(t)= e' \tilde z(t)$.
We next use an argument similar to the one used in Case~1.1.
Since $\beta<0$, $\tilde z(t)'e<0$, we have
\begin{eqnarray} \label{ineq:xbetaneg}
(\alpha \vee \mu) \cdot (\tilde z(t) + \beta\gamma)'e\le  (\alpha
\tilde z(t)+ \mu \beta\gamma)'e \le (\alpha \wedge \mu) \cdot
(\tilde z(t) + \beta\gamma)'e.
\end{eqnarray}
As a result, we obtain that
\begin{eqnarray*}
|2 \alpha ( \alpha  \tilde x(t)+ \mu \beta) e'R (\tilde z(t)+
\beta \gamma)| &\le&  -2\alpha|e'R|\cdot  |\tilde z(t)+ \beta \gamma|
(\alpha\tilde z(t)+\mu\beta\gamma)'e\\
&\le& 2\alpha (\alpha \vee \mu) |e'R| \cdot|\tilde
z(t)+\beta\gamma|^2.
\end{eqnarray*}
In view of (\ref{eq:startingpointneg}), we therefore find that
\begin{eqnarray*}
\frac{d}{dt} g(\tilde x(t),\tilde z(t)) &\le& 2\alpha (\alpha
{\vee}\mu) |e'R| \cdot|\tilde z(t)+\beta\gamma|^2
- \kappa(\tilde z(t)+ \beta \gamma)' [Q R + R'Q](\tilde z(t)+ \beta \gamma)\, \\
\frac{d}{dt} g(\tilde x(t),\tilde z(t)) &\ge& -2\alpha (\alpha
{\vee} \mu) |e'R| \cdot|\tilde z(t)+\beta\gamma|^2 -
\kappa(\tilde z(t)+ \beta \gamma)' [Q R +R'Q](\tilde z(t)+ \beta
\gamma).
\end{eqnarray*}
Increasing $\kappa$ if necessary so that $ -2\alpha
(\alpha{\vee}\mu) |e'R| I +\kappa [Q R +R'Q]$ is positive definite,
we readily obtain part (a) of the lemma.
For part (b), we need to make a comparison with $g$.
By definition of $g$ and (\ref{ineq:xbetaneg}), we obtain from $e'\tilde z(t) =\tilde x(t)$ that
\begin{eqnarray*}
g(\tilde x(t),\tilde z(t) ) &\le&
 ( \tilde z(t) +\beta \gamma)'[ (\alpha \vee \mu) e e' + \kappa Q]( \tilde z(t) +\beta \gamma) \\
g(\tilde x(t),\tilde z(t) ) &\ge&
 ( \tilde z(t) +\beta \gamma)'[ (\alpha \wedge \mu) e e' + \kappa Q](\tilde z(t) +\beta \gamma),
\end{eqnarray*}
which also yields part (b) of the claim in Case~2.2.

Combining the four cases, we have completed the proof of the lemma.
\end{proof}

\section{Negative drift condition for the diffusion-scaled processes}
\label{sec:negdriftdiff}

It is the goal of this appendix to establish the negative drift condition for the diffusion
scaled processes, i.e., to prove Lemma~\ref{lem:abddrift}. For this, we use the
negative drift condition for the
fluid model from Lemma~\ref{lem:fluidexpdif}.

Following Sections~4 and 5 in \cite{DaiHeTezcan10}, we can use the
map $\Psi$ in Lemma~\ref{lem:Phi} to represent the diffusion-scaled state processes given
in (\ref{eq:tildeXn}) and (\ref{eq:tildeZn}):
\begin{equation}
\label{eq:tildeXZPsi}
(\tilde X^n, \tilde Z^n ) = \Psi ( \tilde U^n, \tilde V^n),
\end{equation}
where the exact form of the processes $\tilde U^n$ and $\tilde V^n$
is not important at this point; they are specified in the proof of Lemma~\ref{lem:unifmombd} below.
In view of this identity,
we establish the negative drift condition for the diffusion-scaled processes
by comparing the diffusion-scaled inputs $\tilde U^n$ and $\tilde V^n$
of the map $\Psi$ with their fluid analogs $\tilde u^n$ and $\tilde v^n$,
and then leveraging the negative drift condition of the fluid model.

Our negative drift result allows the diffusion-scaled process $(\tilde A^n,\tilde X^n,\tilde Z^n)$ to
start from an arbitrary initial condition $(\tilde A^n(0),\tilde X^n(0),\tilde Z^n(0)) =(a,x,z)\in\tS^n$.
We initialize the fluid model with the same point $(x,z)$, i.e.,
$(\tilde x(0),\tilde z(0)) = (x,z)$.
As a result, the fluid model depends on $n$ through the state space $\tS^n$ of its initial point.
Throughout this appendix, we stress this dependence
by writing $(\tilde x^n,\tilde z^n)$ for the fluid model instead of $(\tilde x,\tilde z)$.
Similarly, we write $\tilde u^n$ and $\tilde v^n$ instead of $\tilde u$ and $\tilde v$,
as defined through (\ref{eq:fluiduvn}).

The following auxiliary lemma ensures that the inputs to $\Psi$ are close to their fluid analogs.
Its proof is deferred to the end of this appendix.

\begin{lemma}
\label{lem:unifmombd} Fix $t_0 \ge 0$. There exists a constant $C=C(t_0)>0$
such that for each $n$ large enough and each initial state
$(a,x,z)=(\tilde a^n(0), \tilde x^n(0), \tilde z^n(0))\in\tS^n$, we have
\begin{eqnarray}
    \E_{(a,x,z)} \bigl(\|{\tilde U^n}-\tilde u^n\|_{t_0}\bigr) &<&
    C + C \sqrt[4]{ g (x,z)}. \label{eq:Umoment} \\
   \E _{(a,x,z)}\bigl[\|{\tilde V^n}-\tilde v^n\|_{t_0} \bigr]  &<&
    C + C \sqrt[4]{ g (x,z)}. \label{eq:Vmoment}
\end{eqnarray}
\end{lemma}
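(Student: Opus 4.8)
The plan is to start from the explicit representation of $\tilde U^n$ and $\tilde V^n$ obtained by specializing Sections~4 and~5 of \cite{DaiHeTezcan10} to the $GI/Ph/n+M$ model: for $t\le t_0$ one writes $\tilde U^n(t)=x-\mu\beta^n t+\tilde E^n(t)+\tilde M_S^n(t)$ and $\tilde V^n(t)=(I-pe')z+\tilde M_V^n(t)$, where $x=\tilde X^n(0)$, $z=\tilde Z^n(0)$, the process $\tilde E^n$ is the diffusion-scaled centered renewal arrival process, and $\tilde M_S^n,\tilde M_V^n$ are built from the diffusion-scaled compensated counting processes recording the phase transitions, the service completions and the abandonments (any leftover being a deterministic remainder of size $O(t_0)$). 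Comparing with $(\ref{eq:fluiduvn})$, subtracting $(\tilde u^n,\tilde v^n)$ cancels the initial data and leaves only the fluid-drift mismatch $\mu(\beta-\beta^n)t$, which is $O(t_0)$ because $\beta^n\to\beta$. Hence, with $\Delta^n:=\|(\tilde U^n-\tilde u^n,\tilde V^n-\tilde v^n)\|_{t_0}$, it suffices to bound $\E_{(a,x,z)}\|\tilde E^n\|_{t_0}$ and the $L^1$-norms of the running maxima of the compensated counting processes, uniformly in $n$, and to show the total is at most $C+C\sqrt[4]{g(x,z)}$.

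The arrival term and the service/phase-transition martingales are the easy part. For $\tilde E^n$, a uniform second-moment estimate for the renewal counting process (as underlies the functional central limit theorem invoked in \cite{DaiHeTezcan10}; see also \cite{budhiraja2006diffusion}), together with $\lambda^n/n\to\mu$, gives $\E_{(a,x,z)}\|\tilde E^n\|_{t_0}\le C$. For the compensated counting process recording transitions out of service phase $k$, Doob's inequality applies since its predictable quadratic variation is $\frac1n\int_0^{t_0}\nu_k Z_k^n(s)\,ds\le\nu_k t_0$, using that at most $n$ customers are ever in service, so the $L^2$-norm of its running maximum is bounded in $n$. The genuine difficulty is the abandonment martingale $\tilde M_L^n$, whose predictable quadratic variation is $\frac{\alpha}{\sqrt n}\int_0^{t_0}(\tilde X^n(s))^+\,ds$ — the only compensator that is not a priori bounded by a constant, since the scaled queue length $(\tilde X^n(s))^+$ can be large when the initial state is. I would control it by comparing with the fluid model initialized at the same point $(x,z)$: from $(\tilde X^n(s))^+\le(\tilde x^n(s))^+ + |\tilde X^n(s)-\tilde x^n(s)|$ and Lemma~\ref{lem:gbetaflu}(a) (which gives $g(\tilde x^n(s),\tilde z^n(s))\le g(x,z)$ for all $s$, whence $(\tilde x^n(s))^+\le C+C\sqrt{g(x,z)}$ because $\sqrt g$ dominates the Euclidean norm up to an additive constant by $(\ref{eq:gbeta})$), together with Lemma~\ref{lem:Phi}(c) applied to $(\tilde X^n,\tilde Z^n)=\Psi(\tilde U^n,\tilde V^n)$ and $(\tilde x^n,\tilde z^n)=\Psi(\tilde u^n,\tilde v^n)$ (a Lipschitz bound whose constant $C(t_0)$ does not depend on $n$, so $|\tilde X^n(s)-\tilde x^n(s)|\le C(t_0)\Delta^n$), one obtains $(\tilde X^n(s))^+\le C+C\sqrt{g(x,z)}+C(t_0)\Delta^n$ pathwise on $[0,t_0]$. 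Then the predictable quadratic variation of $\tilde M_L^n$ is at most $\frac{\alpha t_0}{\sqrt n}\big(C+C\sqrt{g(x,z)}+C(t_0)\Delta^n\big)$, so by Doob's and Jensen's inequalities $\E_{(a,x,z)}\|\tilde M_L^n\|_{t_0}\le\frac{C}{n^{1/4}}\sqrt{\,C+C\sqrt{g(x,z)}+C(t_0)\,\E_{(a,x,z)}\Delta^n\,}$.

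Collecting these bounds gives the self-referential inequality $\E_{(a,x,z)}\Delta^n\le C+\frac{C}{n^{1/4}}\sqrt{\,C+C\sqrt{g(x,z)}+C(t_0)\,\E_{(a,x,z)}\Delta^n\,}$. For $n$ large enough the coefficient $C(t_0)/\sqrt n$ of $\E_{(a,x,z)}\Delta^n$ inside the square root is small, so using $\sqrt{u+v}\le\sqrt u+\sqrt v$ and $2ab\le a^2+b^2$ one absorbs the $\E_{(a,x,z)}\Delta^n$ term into the left-hand side and is left with $\E_{(a,x,z)}\Delta^n\le C'+C'\sqrt{C+C\sqrt{g(x,z)}}\le C''+C''\sqrt[4]{g(x,z)}$, which is exactly $(\ref{eq:Umoment})$–$(\ref{eq:Vmoment})$. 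I expect the main obstacle to be precisely this absorption step: it is what forces the statement to hold only for $n$ large (the $n^{-1/4}$ decay of the abandonment martingale must beat the $n$-independent Lipschitz constant of $\Psi$), and it is also why the fourth root — rather than the square root — of $g$ appears, since the abandonment martingale is governed by the square root of a quantity that is itself linear in $\sqrt g$. A secondary technical burden is simply to make the representation of $\tilde U^n,\tilde V^n$ and the form of the various compensators sufficiently explicit to guarantee that all remainder terms are $O(t_0)$ uniformly in $n$ and in the initial state.
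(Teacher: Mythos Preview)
Your proposal is correct, but it takes a more elaborate route than the paper for the one delicate term. The paper's representation (5.10)--(5.11) of \cite{DaiHeTezcan10} also contains, in $\tilde V^n$, the routing martingale $\tilde\Phi^{0,n}(\bar B^n(\cdot))$ tracking phase assignments upon service entry; its time change $\bar B^n(t)$ is, like the abandonment time change, not bounded by a constant and deserves the same treatment you give the abandonment term (you gloss over it when you say the phase-transition martingales are the ``easy part'').

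The main methodological difference is how the unbounded time changes are controlled. You compare $(\tilde X^n(s))^+$ to the fluid model via Lemma~\ref{lem:Phi}(c), obtaining a self-referential inequality in $\E_{(a,x,z)}\Delta^n$ that you then close by absorbing the feedback term for $n$ large. The paper instead uses the elementary monotonicity bound $(X^n(s))^+\le (X^n(0))^++E^n(s)$ (and analogously $B^n(t)\le (X^n(0))^++E^n(t)$): the unscaled system size cannot exceed the initial population plus all arrivals. After scaling this gives $\int_0^{t_0}(\bar X^n(s))^+\,ds\le t_0|x|/\sqrt n+t_0\bar E^n(t_0)+t_0/n$, and since $\tilde G^n$ (resp.\ $\tilde\Phi^{0,n}$) is a martingale independent of the arrival process, Doob's inequality conditional on $E^*(\lambda^n t_0)$ yields directly $\E_{(a,x,z)}\|\tilde G^n(\int_0^\cdot(\bar X^n)^+)\|_{t_0}^2\le 4\alpha t_0|x|/\sqrt n+O(1)$, hence an $L^1$ bound of order $C+C\sqrt[4]{g(x,z)}/\sqrt n$. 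No self-reference, no fluid comparison, no absorption step.

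Your approach is sound and arguably more ``structural'' --- it would transfer to models lacking such a clean monotone upper bound on the state --- but here the paper's crude bound is both shorter and avoids the a~priori finiteness of $\E_{(a,x,z)}\Delta^n$ that your absorption argument implicitly needs. Both routes explain why the fourth root of $g$ appears: the martingale $L^2$ bound is the square root of a quantity linear in $|x|\lesssim\sqrt{g}$.
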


With Lemma~\ref{lem:unifmombd} at our disposal,
we are ready to prove the negative drift condition for diffusion-scaled processes.

\begin{proof}[Proof of Lemma \ref{lem:abddrift}]
Throughout this proof, when using constants from auxiliary lemmas,
their subscript again
denotes the lemma they originated from.
\newcommand{\eflu}{\epsilon_{\ref{lem:fluidexpdif}}}
\newcommand{\Cflu}{C_{\ref{lem:fluidexpdif}}}
\newcommand{\Cuv}{C_{\ref{lem:unifmombd}}}
\newcommand{\CLip}{C_{\ref{lem:Lipschitz}}}
\newcommand{\CLipPsi}{C_{\ref{lem:Phi}}}

Let $n$ be large enough as in Lemma \ref{lem:unifmombd} and  let
$(a,x,z)=(\tilde a^n(0), \tilde x^n(0), \tilde z^n(0))\in \tS^n$.
We first note that
\begin{eqnarray} \label{eq:keybound}
\lefteqn{ \left|\sqrt{g( \tilde X^n(t_0),  \tilde
Z^n(t_0))}-\sqrt{g(
\tilde x^n(t_0),  \tilde z^n(t_0))}\right|}\nonumber \\
&\le & \CLip |(\tilde X^n(t_0),  \tilde Z^n(t_0))-( \tilde x^n(t_0),
\tilde z^n(t_0))| \nonumber \\
&=& \CLip | \Psi({ \tilde U^n}, { \tilde V^n})(t_0)- \Psi({\tilde u^n},
\tilde v^n)({t_0})| \nonumber  \\
 &\le & \CLip \| \Psi({ \tilde U^n}, { \tilde V^n})- \Psi({\tilde u^n}, \tilde v^n)\|_{t_0}\nonumber \\
 &\le & \CLipPsi({t_0})  \CLip [\|{ \tilde U^n}-
\tilde u^n\|_{t_0}+\|{\tilde V^n}- \tilde v^n\|_{t_0}],
\end{eqnarray}
where the first inequality follows from Lemma \ref{lem:Lipschitz},
the equality follows from the fact that $(\tilde X^n,  \tilde Z^n)= \Psi
(\tilde U^n,  \tilde V^n)$ and $(\tilde x^n,  \tilde z^n)= \Psi (\tilde u^n,  \tilde v^n)$,
and the last inequality follows from the
Lipschitz continuity of the map $\Psi$ as in part (c) of Lemma~\ref{lem:Phi}.

It follows from  (\ref{eq:keybound}), Lemma~\ref{lem:fluidexpdif},
and Lemma~\ref{lem:unifmombd} that
\begin{eqnarray}
  \lefteqn{ \E_{(a,x,z)}\left(\sqrt{g( \tilde X^n(t_0),
        \tilde Z^n(t_0))} - \sqrt{g(x, z)}\right)}\nonumber \\
 &\le& \E_{(a,x,z)}\left|\sqrt{g( \tilde X^n(t_0),  \tilde Z^n(t_0)) }-\sqrt{g(\tilde x^n(t_0),
 \tilde z^n(t_0))} \right|\nonumber \\
&& {}
+\left(\sqrt{g(\tilde x^n(t_0), \tilde z^n(t_0))} -\sqrt{g(x, z)} \right)\nonumber \\
 &\le & 2 \CLipPsi({t_0}) \CLip  \Cuv(1 + \sqrt[4]{g(x,z)} )
 + \Cflu -\eflu \sqrt {g(x, z)}.  \label{ineq:diffc5}
\end{eqnarray}
Pick some $C'>0$ such that, for any $b\in \R$ with $ b\ge C'$,
\[
2 \CLipPsi({t_0}) \CLip \Cuv \sqrt[4]{{b}} -\frac12 \eflu
{\sqrt {b}} \le 0.
\]
The constants $C$ and $\epsilon$ in the statement of the lemma are chosen as follows:
\[
C= {2}\CLipPsi({t_0}) \CLip  \Cuv(1 + \sqrt[4]{C'} ) + \Cflu, \quad\quad \epsilon=\frac 12 \eflu,
\]
and we now verify the statement of the lemma with these definitions.
If $(x,z)$ satisfies $g(x,z)<C'$, then the right-hand side of (\ref{ineq:diffc5}) is bounded from above by
\[
{2}\CLipPsi({t_0}) \CLip  \Cuv(1 + \sqrt[4]{C'} ) +\Cflu - \eflu \sqrt{g(x,z)} = C-2\epsilon \sqrt{g(x,z)}.
\]
If $(x,z)$ satisfies $g(x,z)\ge C'$, then it is bounded from above by
\[
{2}\CLipPsi({t_0}) \CLip  \Cuv + \Cflu -\frac 12 \eflu \sqrt {g} = C -\epsilon\sqrt{g(x,z)} -\CLipPsi({t_0}) \CLip  \Cuv\sqrt[4]{C'}.
\]
We have thus obtained (\ref{ineq:driftineq}) in both cases.
\end{proof}

\subsection*{Proof of auxiliary lemma}

We now prove Lemma~\ref{lem:unifmombd}.

\begin{proof}[Proof of Lemma~\ref{lem:unifmombd}]
In this proof, the constant $C$ is a generic constant independent of $n$, but may vary line from line.
Fix $t_0>0$.
We start by specifying the processes $\tilde U^n$ and $\tilde V^n$ for which (\ref{eq:tildeXZPsi}) holds.
Following (5.10) and (5.11) in \cite{DaiHeTezcan10}, we have
\begin{align}
&  \tilde U^{n}(t)=\tilde X^n(0) - \mu\beta^n t +\tilde{E}^{n}(t)+ e^{\prime}%
\tilde M^{n}(t)- \tilde G^n\left( \int_{0}^{t}( \bar
X^{n}(s))^{+}\,ds \right),
\label{eq:Un}\\
&  \tilde V^{n}(t)=(I-pe') \tilde Z^n(0)+\tilde{\Phi}^{0,n}(\bar B^{n}%
(t))+(I-pe^{\prime}) \tilde M^{n}(t). \label{eq:Vntilde}%
\end{align}
The processes $\tilde E^n, \tilde M^{n}, \tilde G^{n}, \tilde{\Phi}^{0,n}$, $\bar X^n$, and $\bar B^{n}$
are defined as follows, see Section~5.2 of \cite{DaiHeTezcan10}.
First, $\tilde E^n$ is given in
(\ref{eq:tildeEn}). For each $k = 1, \ldots, K$, let $S_k$ be a
Poisson process with rate $\nu_k$, and let $G$ be a Poisson process
with rate $\alpha$. For each $n\ge 1$ define the diffusion-scaled
processes:
\begin{eqnarray}\label{eq:servabdscaled}
 \tilde S_k^n (t)
=\frac{1}{\sqrt{n}} (S_k (nt)-n \nu_k t), \quad \tilde G^n (t)
=\frac{1}{\sqrt{n}} (G (nt)-n \alpha t) \quad t\ge 0.
\end{eqnarray}
Moreover, for
each $N \ge 1$ and $k = 0, \ldots, K$, define the routing processes
\[\Phi^k (N)= \sum_{j=1}^{N} \phi^k (j), 
\]
where $\{\phi^k (j): j=1, 2, \ldots \}$ are i.i.d.\ Bernoulli random
vectors with mean $p^k$. Here $p^0=p,$ and $p^k$ is the $k$th
column of matrix $P'$. For each $n\ge 1$,  set
\begin{eqnarray} \label{eq:Phikn}
\tilde{\Phi}^{k,n} (t)= \frac{1}{\sqrt{n}}
\sum_{j=1}^{\lfloor nt \rfloor } (\phi^k (j) -p^k) \quad t\ge 0,
\end{eqnarray}
where, for an $x\in \R$, $\lfloor x \rfloor$ is the largest integer that is
less than or equal to $x$.
Let $T_k^n (t)$ be the cumulative amount of service effort received
by customers in phase $k$ service in $(0, t ]$, $B^n (t)$ be the
cumulative number of customers who have entered into service by time
$t$. Then $\tilde M^n(t)$ is defined via
\begin{eqnarray} \label{eq:tildeMn}
\tilde{M}^{n}(t)=\sum_{k=1}^{K}\tilde{\Phi}%
^{k,n}(\bar{S}_{k}^{n}(\bar{T}_{k}^{n}(t)))-(I-P^{\prime})\tilde{S}^{n}%
(\bar{T}^{n}(t)),
\end{eqnarray}
where $\bar{S}^{n}(t)=S(nt)/n$ and $\bar{T}^{n}(t)=T_n(t)/n$ for
$t\geq0$. We also let $\bar{B}^{n}(t)=B^n(t)/n$ for $t\geq0$.
Finally, we have
\begin{displaymath}
\bar X^n(t) = \frac{1}{n} X^n(t),
\end{displaymath}
where $X^n(t)$ is given in (\ref{eq:X}). As in \cite{DaiHeTezcan10},
we have that
\[ \text{$\tilde E^n, \tilde S_1^n, \ldots,
\tilde S_K^n, \tilde{\Phi}^{0,n}, \ldots, \tilde{\Phi}^{K,n} $ and
$\tilde G^{n}$ are mutually independent.} \]

Having explained the meaning of all processes involved, we now proceed and prove the statement
of Lemma~\ref{lem:unifmombd}.
 By (\ref{eq:Un}), (\ref{eq:Vntilde}), and (\ref{eq:fluiduvn}) we have
\begin{align}
& \|{\tilde U^{n} -\tilde u^n}\|_{t_0} \le
\mu |\beta^n-\beta| t_0+
\|{\tilde{E}^{n}}\|_{t_0}+\sqrt{K}\|{\tilde{M}^{n}}\|_{t_0}+
\left|\left|\tilde G^n \left( \int_{0}^{t}({\bar X^{n}(s)})^{+}\,ds\right)\right|\right|_{t_0}, \label{eq:Uu}\\
 &\|{\tilde V^{n} -\tilde v^n}\|_{t_0} \le \|\tilde{\Phi}^{0,n}(\bar
{B}^{n})\|_{t_0} +(1+|p|\sqrt{K}) \|{\tilde{M}^{n}}\|_{t_0}. \label{eq:Vv}
\end{align}

We first establish (\ref{eq:Umoment}). The proof is similar to that
of Lemma~3.5 in \cite{budhiraja2006diffusion} and we only highlight
the main differences. We start with inequality (\ref{eq:Uu}). We bound the
three terms in the right side of (\ref{eq:Uu}) separately. Let
$\{{E}^*(t): t \ge 0\}$ be a renewal process associated with i.i.d.\
sequence $\{u(i):i\ge 1\}$ that was given in
Assumption~\ref{ass:expinterarr}, where $u(1)$ is the first renewal
time. It follows from Assumption~\ref{ass:expinterarr} that for each
$t \ge 0,$
\begin{eqnarray} \label{eq:Etransform}
E^*(\lambda^n t) \le E^n(t) \le 1 + E^*(\lambda^n t).
\end{eqnarray}
Since $E^*(\cdot)$ is independent of $(A^n(0), X^n(0), Z^n(0))$ for
any $n,$ using the definition of ${\tilde{E}^{n}}$ in (\ref{eq:tildeEn})
and Equation (\ref{eq:Etransform}), we deduce that there exists a
constant $C$ independent of $n$ {and of initial states $(a,x,z)$}
such that
\begin{eqnarray*}
\E_{(a,x,z)} \|{\tilde{E}^{n}}\|_{t_0}^2 &\le& \E_{(a,x,z)}
 \sup_{0\le t \le t_0}  \frac{1}{n} \max\{|1+ E^*(\lambda^n t) - \lambda^n t |^2, | E^*(\lambda^n t) - \lambda^n t |^2\} \nonumber \\
&\le & \E_{(a,x,z)}  \sup_{0\le t \le t_0}  \frac{2}{n} (|
E^*(\lambda^n t) - \lambda^n t |^2 +1)\\
&=& \E  \sup_{0\le t \le t_0}  \frac{2}{n} (|
E^*(\lambda^n t) - \lambda^n t |^2 +1) \\
&= & 2 \E \sup_{0 \le t \le t_0} \frac{1}{\lambda^n}
|E^*(\lambda^n t) - \lambda^n t |^2 \cdot \frac{\lambda^n}{n} + \frac{2}{n} \nonumber \\
&\le & (t_0+1)C,
\end{eqnarray*} where the last inequality follows from Lemma~3.5 in
\cite{budhiraja2006diffusion} and the fact that the sequence
$\{{\lambda^n}/{n}: n \ge 1\}$ is bounded. Hence we have
\begin{eqnarray} \label{eq:boundE}
\E_{(a,x,z)} \|{\tilde{E}^{n}}\|_{t_0} \le C + C \sqrt{t_0}.
\end{eqnarray}

Moreover, we know from (\ref{eq:tildeMn}) and Lemma~3.5 in
\cite{budhiraja2006diffusion} that
\begin{eqnarray*}
\E_{(a,x,z)} \|{\tilde{M}^{n}}\|_{t_0}^2  \le { ({t_0}+1) C},
\end{eqnarray*}
where we use the fact that for each $k,$ $\bar{T}_k^n (t) \le t$ for
all $t \ge 0.$ This immediately implies
\begin{eqnarray}\label{eq:Mnmoment}
\E_{(a,x,z)} \|{\tilde{M}^{n}}\|_{t_0}  \le C + C \sqrt{t_0},
\end{eqnarray}

Finally we show there is a constant $C(t_0)$ which depends on $t_0$
but is independent of $n$ and any initial state $(a,x,z)\tS^n$ such that
\begin{eqnarray} \label{eq:boundG}
\E_{(a,x,z)}\left\|\tilde G^n \left( \int_{0}^{t}(\bar
X^{n}(s))^{+}\,ds\right)\right\|_{t_0} \le
 C(t_0) + C(t_0) \frac{\sqrt[4]{g(x,z)} } {\sqrt{n}}.
\end{eqnarray}
To see this, we know from (\ref{eq:X}) that for all $0 \le s \le t$
\begin{equation*}
(\bar{X}^{n}(s))^+\leq(\bar{X}^{n}(0))^{+}+\bar{E}^{n}(t) =
x^+/\sqrt{n}+\bar{E}^{n}(t),
\end{equation*}
where $x = \tilde X^n(0) = \sqrt{n} \bar X^n(0)$ and $\bar{E}^{n}(t)
= \frac{1}{n} {E}^{n}(t)$ for $t \ge 0.$ In conjunction with
(\ref{eq:Etransform}) we obtain
\begin{eqnarray*}
\int_{0}^{t}{(\bar{X}^{n}(s))^{+}\,ds} \le t\Bigl(\frac{
|x|}{\sqrt{n}}+\bar{E}^{n}(t)\Bigr) \le  t \frac{|x|}{\sqrt{n}}+
\frac{t}{n}{E^*}(\lambda^n t) + \frac{t}{n} .
\end{eqnarray*}
We have on each sample path
\begin{eqnarray}\label{eq:boundxt}
\sup_{0 \le t \le t_0} \left|\tilde G^n \left( \int_{0}^{t}(\bar
X^{n}(s))^{+}\,ds\right)\right| \le  \|\tilde G^n \|_{ \frac{ t_0|x|}{\sqrt{n}}+
\frac{t_0}{n}{E^*}(\lambda^n t_0) + \frac{t_0}{n}   }.
\end{eqnarray}
Since $G(\cdot)$ is a Poisson process, $\tilde G^n$
defined in (\ref{eq:servabdscaled}) is a martingale. In addition,
the random variable $E^*( \lambda^n t_0)$ is independent of $\tilde
G^n$ and the age $A^n(0)$ associated with the arrival process $E^n$.
Furthermore, it follows again from  Lemma~3.5 in
\cite{budhiraja2006diffusion} that there is some constant $C$
independent of $n,$ such that for $t \ge 0,$
\begin{eqnarray} \label{eq:boundElamdan}
\E \left[E^*(\lambda^n t) \right]\le \lambda^n t + C
\sqrt{\lambda^n} (1 + \sqrt{t}).
\end{eqnarray}
Therefore we deduce from (\ref{eq:boundxt}) that
\begin{eqnarray*}
\lefteqn{
\E_{(a,x,z)}\left\|\tilde G^n \left( \int_{0}^{t}(\bar
X^{n}(s))^{+}\,ds\right)\right\|_{t_0}^2 \le  \E_{(a,x,z)} \left[ \|\tilde G^n
\|_{
\frac{ t_0|x|}{\sqrt{n}}+ \frac{t_0}{n}{E^*}(\lambda^n t_0) + \frac{t_0}{n}}^2 \right]} \\
&= & \E\left[  \|\tilde G^n \|_{ \frac{ t_0|x|}{\sqrt{n}}+
\frac{t_0}{n}{E^*}(\lambda^n t_0) + \frac{t_0}{n}}^2 \right] \\
&=& \E \left[\E \left[\left.\|\tilde G^n \|_{ \frac{ t_0|x|}{\sqrt{n}}+
\frac{t_0}{n}{E^*}(\lambda^n t_0) + \frac{t_0}{n}}^2 \right|
{E^*}(\lambda^nt_0)\right]
\right] \\
&\le& \E \left[ 4 \alpha \left(\frac{ t_0|x|}{\sqrt{n}}+
\frac{t_0}{n}{E^*}(\lambda^n t_0) + \frac{t_0}{n}\right)  \right]    \\
&\le& 4 \alpha \frac{ t_0|x|}{\sqrt{n}} +  4 \alpha \frac{t_0}{n}
\left( {\lambda^n t_0} + C \sqrt{\lambda^n} + C \sqrt{\lambda^n t_0}
 + 1 \right),
\end{eqnarray*}
where in the second last inequality we use Doob's maximal
inequality {for martingales} (see, e.g., Theorem II.1.7
in~\cite{revuz1999continuous}), and in the last inequality we use
(\ref{eq:boundElamdan}). Since there is a constant $C$ independent
of $n$ such that the sequence $\{{\lambda^n}/{n}: n \ge 1\}$ is
bounded by $C$ and
\[ |x| \le \sqrt{g(x,z)} +C \quad \text{ for }  (a, x,z)\in \tS^n,  \]
we obtain (\ref{eq:boundG}). On combining (\ref{eq:boundE}),
(\ref{eq:Mnmoment}) and (\ref{eq:boundG}), we deduce from
(\ref{eq:Uu}) that (\ref{eq:Umoment}) holds.

To prove (\ref{eq:Vmoment}), we start  from inequality (\ref{eq:Vv}).
Since $B^n (t)$ is the cumulative number of customers who have
entered into service by time $t$, we obtain that
\[\bar{B}^{n}(t) = \frac{1}{n}B^n (t) \leq \frac{1}{n}({X}^{n}(0))^{+}+ \frac{1}{n}{E}^{n}(t).\]
In addition, it is clear that $\tilde{\Phi}^{0,n}$ defined in
(\ref{eq:Phikn}) is a martingale, thus we can proceed in a similar
fashion as the proof for (\ref{eq:boundG}) and show that there is a
constant $C(t_0)$ depending on $t_0,$ but is independent of $n$ and
any initial state $(a,x,z)$ such that
\begin{eqnarray*}
\E_{(a, x,z)}\|\tilde{\Phi}^{0,n}(\bar {B}^{n})\|_{t_0} \le
 C(t_0) + C(t_0) \frac{\sqrt[4]{g(x,z)} } {\sqrt{n}}.
\end{eqnarray*}
On combining (\ref{eq:Mnmoment}) we obtain (\ref{eq:Vmoment}) from
(\ref{eq:Vv}).
\end{proof}

%

\bibliographystyle{ims}
\bibliography{gao_ref}

\def\cprime{$'$} \def\cprime{$'$} \def\cprime{$'$} \def\cprime{$'$}
  \def\cprime{$'$} \def\cprime{$'$} \def\cprime{$'$} \def\cprime{$'$}
  \def\cprime{$'$}
\begin{thebibliography}{27}
\expandafter\ifx\csname natexlab\endcsname\relax\def\natexlab#1{#1}\fi
\expandafter\ifx\csname url\endcsname\relax
  \def\url#1{\texttt{#1}}\fi
\expandafter\ifx\csname urlprefix\endcsname\relax\def\urlprefix{URL }\fi
\providecommand{\eprint}[2][]{\url{#2}}

\bibitem[{Bramson(1998)}]{bra98b}
\textsc{Bramson, M.} (1998).
\newblock State space collapse with application to heavy traffic limits for
  multiclass queueing networks.
\newblock \textit{Queueing Systems}, \textbf{30} 89--140.

\bibitem[{Brown et~al.(2005)Brown, Gans, Mandelbaum, Sakov, Shen, Zeltyn and
  Zhao}]{brown2005statistical}
\textsc{Brown, L.}, \textsc{Gans, N.}, \textsc{Mandelbaum, A.}, \textsc{Sakov,
  A.}, \textsc{Shen, H.}, \textsc{Zeltyn, S.} and \textsc{Zhao, L.} (2005).
\newblock Statistical analysis of a telephone call center.
\newblock \textit{Journal of the American Statistical Association},
  \textbf{100} 36--50.

\bibitem[{Budhiraja and Ghosh(2006)}]{budhiraja2006diffusion}
\textsc{Budhiraja, A.} and \textsc{Ghosh, A.~P.} (2006).
\newblock Diffusion approximations for controlled stochastic networks: An
  asymptotic bound for the value function.
\newblock \textit{The Annals of Applied Probability}, \textbf{16} 1962--2006.

\bibitem[{Budhiraja and Lee(2009)}]{budhiraja2009stationary}
\textsc{Budhiraja, A.} and \textsc{Lee, C.} (2009).
\newblock Stationary distribution convergence for generalized {J}ackson
  networks in heavy traffic.
\newblock \textit{Mathematics of Operations Research}, \textbf{34} 45--56.

\bibitem[{Dai(1995)}]{dai95a}
\textsc{Dai, J.~G.} (1995).
\newblock On positive {H}arris recurrence of multiclass queueing networks: A
  unified approach via fluid limit models.
\newblock \textit{Annals of Applied Probability}, \textbf{5} 49--77.

\bibitem[{Dai and He(2013)}]{DaiHe13}
\textsc{Dai, J.~G.} and \textsc{He, S.} (2013).
\newblock Many-server queues with customer abandonment: Numerical analysis of
  their diffusion models.
\newblock \textit{Stochastic Systems}, \textbf{3} 96--146.

\bibitem[{Dai et~al.(2010)Dai, He and Tezcan}]{DaiHeTezcan10}
\textsc{Dai, J.~G.}, \textsc{He, S.} and \textsc{Tezcan, T.} (2010).
\newblock Many-server diffusion limits for ${G/Ph/n+GI}$ queues.
\newblock \textit{Annals of Applied Probability}, \textbf{20} 1854--1890.

\bibitem[{Dai and Meyn(1995)}]{daimey95}
\textsc{Dai, J.~G.} and \textsc{Meyn, S.~P.} (1995).
\newblock Stability and convergence of moments for multiclass queueing networks
  via fluid limit models.
\newblock \textit{IEEE Transactions on Automatic Control}, \textbf{40}
  1889--1904.

\bibitem[{Davis(1984)}]{dav84}
\textsc{Davis, M. H.~A.} (1984).
\newblock Piecewise deterministic {M}arkov processes: a general class of
  non-diffusion stochastic models.
\newblock \textit{Journal of Royal Statist.\ Soc.\, series B}, \textbf{46}
  353--388.

\bibitem[{Dieker and Gao(2012)}]{diekergao:OU}
\textsc{Dieker, A.~B.} and \textsc{Gao, X.} (2012).
\newblock Positive recurrence of piecewise {O}rnstein-{U}hlenbeck processes and
  common quadratic {L}yapunov functions.
\newblock \textit{Annals of Applied Probability.}
\newblock To appear.

\bibitem[{Gamarnik and Goldberg(2013)}]{gamarnik2013steady}
\textsc{Gamarnik, D.} and \textsc{Goldberg, D.} (2013).
\newblock Steady-state ${GI/GI/N}$ queue in the {Halfin-Whitt} regime.
\newblock \textit{Annals of Applied Probability}.
\newblock To appear.

\bibitem[{Gamarnik and Mom\v{c}ilovi\'{c}(2008)}]{GamarnikMomcilovic08}
\textsc{Gamarnik, D.} and \textsc{Mom\v{c}ilovi\'{c}, P.} (2008).
\newblock Steady-state analysis of a multi-server queue in the {H}alfin-{W}hitt
  regime.
\newblock \textit{Advances in Applied Probability}, \textbf{40} 548--577.

\bibitem[{Gamarnik and Stolyar(2012)}]{gamarnik2012multiclass}
\textsc{Gamarnik, D.} and \textsc{Stolyar, A.~L.} (2012).
\newblock Multiclass multiserver queueing system in the {H}alfin-{W}hitt heavy
  traffic regime: asymptotics of the stationary distribution.
\newblock \textit{Queueing Systems}, \textbf{71} 25--51.

\bibitem[{Gamarnik and Zeevi(2006)}]{GamarnikZeevi06}
\textsc{Gamarnik, D.} and \textsc{Zeevi, A.} (2006).
\newblock Validity of heavy traffic steady-state approximation in generalized
  {J}ackson networks.
\newblock \textit{Annals of Appllied Probability}, \textbf{16} 56--90.

\bibitem[{Gans et~al.(2003)Gans, Koole and Mandelbaum}]{ganskoolemandelbaum}
\textsc{Gans, N.}, \textsc{Koole, G.} and \textsc{Mandelbaum, A.} (2003).
\newblock Telephone call centers: Tutorial, review, and research prospects.
\newblock \textit{Manufacturing \& Service Operations Management}, \textbf{5}
  79--141.

\bibitem[{Gurvich(2013)}]{gurvichinterchange2013}
\textsc{Gurvich, I.} (2013).
\newblock Validity of heavy-traffic steady-state approximations in multiclass
  queueing networks: The case of queue-ratio disciplines.
\newblock \textit{Mathematics of Operations Research}.
\newblock To appear.

\bibitem[{Halfin and Whitt(1981)}]{HalWhi81}
\textsc{Halfin, S.} and \textsc{Whitt, W.} (1981).
\newblock Heavy-traffic limits for queues with many exponential servers.
\newblock \textit{Operations Research}, \textbf{29} 567--588.

\bibitem[{Katsuda(2010)}]{katsuda2010state}
\textsc{Katsuda, T.} (2010).
\newblock State-space collapse in stationarity and its application to a
  multiclass single-server queue in heavy traffic.
\newblock \textit{Queueing Systems}, \textbf{65} 237--273.

\bibitem[{Konstantopoulos and Last(1999)}]{lyarenewal}
\textsc{Konstantopoulos, T.} and \textsc{Last, G.} (1999).
\newblock On the use of {L}yapunov function methods in renewal theory.
\newblock \textit{Stochastic Process. Appl.}, \textbf{79} 165--178.

\bibitem[{Meyn and Tweedie(2009)}]{MeynTweedie09}
\textsc{Meyn, S.} and \textsc{Tweedie, R.~L.} (2009).
\newblock \textit{Markov chains and stochastic stability}.
\newblock 2nd ed. Cambridge University Press, Cambridge.

\bibitem[{Meyn and Down(1994)}]{meydow94}
\textsc{Meyn, S.~P.} and \textsc{Down, D.} (1994).
\newblock Stability of generalized {J}ackson networks.
\newblock \textit{Annals of Applied Probability}, \textbf{4} 124--148.

\bibitem[{Meyn and Tweedie(1993)}]{meytwe93b}
\textsc{Meyn, S.~P.} and \textsc{Tweedie, R.~L.} (1993).
\newblock Stability of {M}arkovian processes {III}: {F}oster-{L}yapunov
  criteria for continuous time processes.
\newblock \textit{Adv.\ Appl.\ Probab.}, \textbf{25} 518--548.

\bibitem[{Puhalskii and Reiman(2000)}]{PuhalskiiReiman00}
\textsc{Puhalskii, A.~A.} and \textsc{Reiman, M.~I.} (2000).
\newblock The multiclass {$GI/PH/N$} queue in the {H}alfin-{W}hitt regime.
\newblock \textit{Advances in Applied Probability}, \textbf{32} 564--595.
\newblock Correction: \textbf{36}, 971 (2004).

\bibitem[{Revuz and Yor(1999)}]{revuz1999continuous}
\textsc{Revuz, D.} and \textsc{Yor, M.} (1999).
\newblock \textit{Continuous martingales and Brownian motion}.
\newblock Springer Verlag.

\bibitem[{Rolski et~al.(2009)Rolski, Schmidli, Schmidt and
  Teugels}]{rolski2009stochastic}
\textsc{Rolski, T.}, \textsc{Schmidli, H.}, \textsc{Schmidt, V.} and
  \textsc{Teugels, J.} (2009).
\newblock \textit{Stochastic processes for insurance and finance}.
\newblock Wiley.

\bibitem[{Tezcan(2008)}]{tezcan2008optimal}
\textsc{Tezcan, T.} (2008).
\newblock Optimal control of distributed parallel server systems under the
  {H}alfin and {W}hitt regime.
\newblock \textit{Mathematics of Operations Research}, \textbf{33} 51--90.

\bibitem[{Ye and Yao(2010)}]{ye2010diffusion}
\textsc{Ye, H.-Q.} and \textsc{Yao, D.~D.} (2010).
\newblock Diffusion limit of a two-class network: stationary distributions and
  interchange of limits.
\newblock \textit{ACM SIGMETRICS Performance Evaluation Review}, \textbf{38}
  18--20.

\end{thebibliography}

\end{document}